\newtheorem{Thm}{Theorem}[section]
\newtheorem{Lem}[Thm]{Lemma}
\newtheorem{Prop}[Thm]{Proposition}
\theoremstyle{definition}
\newtheorem{Def}[Thm]{Definition}
\newtheorem{Rem}[Thm]{Remark}
\newtheorem{Exa}[Thm]{Example}
\newcommand{\Z}{\mathbb{Z}} 
\newcommand{\R}{\mathbb{R}}
\begin{document}

\title[Range on recurrent graph]{Some results for range of random walk on graph with spectral dimension two}
\author{Kazuki Okamura}
\address{School of General Education, Shinshu University, 3-1-1, Asahi, Matsumoto, Nagano, 390-8621, JAPAN.} 
\email{kazukio@shinshu-u.ac.jp} 
\keywords{Range of random walk, spectral dimension, recurrent graph}  
\subjclass[2010]{60K35}

\begin{abstract} 
We consider the range of the simple random walk on graphs with spectral dimension two.  
We give a form of strong law of large numbers under a certain uniform condition, which is satisfied by not only the square integer lattice but also a  class of fractal graphs. 
Our results imply the strong law of large numbers on the square integer lattice established by Dvoretzky and Erd\"os (1951).     
Our proof does not depend on spacial homogeneity of space and gives a new proof of  the strong law of large numbers on the lattice.  

We also show that the behavior of appropriately scaled expectations of the range is stable with respect to every ``finite modification" of the two-dimensional integer lattice, and furthermore we construct a recurrent graph such that the uniform condition holds  but the scaled expectations fluctuate. 
As an application, we establish a form of law of the iterated logarithms for lamplighter random walks in the case that the spectral dimension of the underlying graph is two. 
\end{abstract}

\maketitle

\setcounter{tocdepth}{1}
\tableofcontents

\section{Introduction and main results}

Let $R_n$ be  the number of points which a random walk on an infinite connected graph visits up to time $n$. 
It is called the range of random walk. 
There are many results for ranges on the integer lattices $\Z^d$ (\cite{DE, JO, JP70, JP71, JP72-1, JP72-2, JP74, S, DV, L, LR, H98, H01, HK01, HK02, BK, BR, BCR, LV}).   
On the other hand, there are relatively few results for ranges on more general (deterministic or random) infinite connected simple graphs (\cite{BIK, G, KM, KoN, O14, R}).  
Difficulties for graphs, especially for {\it non} vertex-transitive graphs, arise from the lack of spacial homogeneity of graphs. 

A recurrent (resp. transient) graph is a graph on which the simple random walk on it is recurrent (resp. transient).    
Let $p_n (x, y)$ be the heat kernel of the simple random walk on a graph $G$.   
Define the spectral dimension of $G$ by 
\[ d_s (G) := -2\lim_{n \to \infty} \frac{\log p_{2n}(x,x)}{\log n}, \]  
if the limit exists.  
$G$ is transient if $d_s (G) > 2$ and $G$ is recurrent if $d_s (G) < 2$.   
This classification affects the growth rate of $\{R_n\}_n$.  
By \cite{DE}, the growth rates  of $\{R_n\}_n$ on $\Z^d, d = 1,2$, are strictly smaller than those on $\Z^d, d \ge 3$.

Ranges on some strongly recurrent\footnote{To the best of our knowledge, there is no unified definition of strongly recurrence. Notions of (very) strongly recurrent graphs are introduced in \cite{B04-1, T}. Their definitions are different from each other, and clearly different from the definition of null recurrence (i.e. the expectation for the first return time to the starting point is infinite. See \cite{W} for details.) for Markov chains.} graphs are considered by \cite{BJKS, KM, KoN, HHH} with motivations for verifying the Alexander-Orbach conjecture \cite{AO}. 
The random graphs appearing in their papers satisfy almost surely that $d_s (G) < 2$, and furthermore it holds that 
\[ \lim_{n \to \infty} \frac{\log R_n}{\log n} = \frac{d_s (G)}{2}, \textup{ a.s}.\]  
for the simple random walk on $G$. 

In \cite{O14}, the range on transient graphs satisfying a certain uniform condition are considered.  
\cite{O14} and \cite[Proposition 5.5]{KuN} extend the law of large numbers on $\Z^d, d \ge 3$,  to a certain class of graphs including not only $\Z^d$ but also some non-vertex transitive graphs such as the $d$-regular tree, and the infinite $d$-dimensional standard Sierpinski carpet graph, $d \ge 3$. 
As an immediate corollary of the law of large numbers, it holds that 
\[ \lim_{n \to \infty} \frac{\log R_n}{\log n} = 1, \textup{ a.s}.\]  
for those graphs. 

In this sense, the case that $d_s (G) = 2$ is a threshold case.  
In this paper, we consider the range of random walk on graphs with spectral dimension two. 
We show that if a recurrent graph satisfies a certain uniform condition, then, a certain strong law of large numbers for the range holds. 
The uniform condition is satisfied by a certain class of {\it non-}vertex-transitive graphs including isoradial graphs by \cite{K02} and fractal graphs by \cite{B04-1}, 
We remark that the uniform condition in \cite{O14} mentioned in the above paragraph is not suitable for recurrent graphs. 
Although we mainly focus on graphs which are {\it not} vertex-transitive, our results imply the strong law of large numbers on $\Z^2$ established by \cite{DE}. 

We show that the behavior of appropriately scaled expectations is stable with respect to every ``finite modification" of $\Z^2$.  
We also construct a recurrent graph satisfies the uniform condition but appropriately scaled expectations fluctuate.   
We finally apply our results to a partial law of the iterated logarithms for a random walk on a lamplighter graph when the spectral dimension of the underlying graph is two, which is remained to be open in \cite{KuN}. 

In the case that $G = \Z^2$, properties of the range such as law of large numbers, central limit theorem, law of the iterated logarithm, and large and moderate deviations are investigated by \cite{DE, JP70, JP72-2,  L, LR, HK01, BK, BR, BCR, LV}, 
however, the techniques used in those references heavily depend on spacial homogeneity of $\Z^2$. 
The techniques used for the range on strongly recurrent graphs in  \cite{BJKS, KM, KoN, HHH} do not work well enough for our purposes. 
Our proof of the {\it upper} bound for $R_n$ utilizes a certain large deviation technique similar to \cite{O14} which deals with the transient case. 

As in the case that $G = \Z^2$ in \cite{DE, JP70}, the most technically involved part is the proof of the {\it lower} bound for $R_n$. 
Our technical novelty is in the proof of the lower bound. 
We first show two preliminary results of weak forms in Theorem \ref{WLLN-UC} (ii) and (iii) below. 
Second, by assuming more regularities for graphs,  
we establish a result for self-intersections of a single random walk in Proposition \ref{self-intersect-main} below by utilizing some estimates for the simple random walk on graphs satisfying the Ahlfors regularity and (sub-)Gaussian heat kernel estimates.  
Contrary to \cite{L} dealing with the case that $G = \Z^2$,  self-intersections of the simple random walk have to be distinguished with intersections of two independent random walks starting from a common point. 
In Section 4, by combining Theorem \ref{WLLN-UC} (ii) and (iii) and Proposition \ref{self-intersect-main}, 
we perform a certain form of mathematical induction with respect to magnitudes of bounds and obtain the desired lower bound, which is stated  in Theorem \ref{Lower} below. 
Our strategy of the proof is different from \cite{DE, JP70, JP72-2, L, LR}. 
Specifically, we do not use any variance estimates as in \cite{JP72-2, L, LR} or deal with intersections of four different pieces of random walk traces as in \cite{DE, JP70}. 
In these senses, our proof of the strong law of large numbers is somewhat simpler than \cite{DE, JP70, JP72-2, L, LR}. 

\subsection{Framework and main results}  

Let $G = (V(G), E(G))$ be an infinite connected simple graph with bounded degrees. 
Denote the degree of the vertex $z$ by $\deg(z)$. 
Let $(S_n, P^x)$ be the simple random walk on $G$, that is, it is a Markov chain on $G$ whose transition probabilities are given by 
\[ P^x (S_{n+1} = y | S_n = z) = \frac{1}{\deg(z)}, \ P^x (S_0 = x) = 1, \] 
hold for every $n \ge 0$, $x, y, z \in V(G)$ such that $\{y, z\} \in E(G)$. 
Throughout this paper, we consider the simple random walk unless otherwise stated. 
Let $E^x$ be the expectation of $P^x$. 
Let the transition probability of $(S_n, P^x)$ be 
\[ p_n (x, y) := P^x (S_n = y), \ x, y \in G, n \ge 0.\]  
Let 
\[ T_A := \inf\{ k \ge 1 : S_k \in A \}, \ A \subset G. \]
For $A \subset G$, we denote the complement of $A$ with respect to $G$ by $A^c$.   
Let $d$ be the graph distance of $G$ and $B(x,r)$ be the closed ball with center $x$ and radius $r$, specifically, 
\[ B(x,r) = \{y \in V(G) : d(x,y) \le r\}.\] 
For $r \in \mathbb{R}$, 
let $\left\lfloor r \right\rfloor$ be the unique integer such that $\left\lfloor r \right\rfloor \le r < \left\lfloor r \right\rfloor +1$. 

In this paper, we use the term ``(uniformly) weakly recurrent" in the following sense. 
\begin{Def}
(i) We say that $G$ is {\it weakly recurrent} if for each $x \in G$, there exist positive constants $c_x$ and $C_x$ such that for every $n \ge 1$,  
\[ c_x \le n (p_{n} (x,x) + p_{n+1}(x,x)) \le C_x.  \]
(ii) We say that $G$ is {\it uniformly weakly recurrent} if there exist positive constants $c$ and $C$ such that for every $n \ge 1$ and \textit{every} $x \in G$,  
\begin{equation}\label{uc-ul}   
c \le n (p_{n} (x,x) + p_{n+1}(x,x)) \le C. 
\end{equation} 
\end{Def}
Condition (ii) above is the uniform condition for graphs mentioned in the introduction. 

\begin{Def}\label{infsup}
Let 
\[ r_{\inf} := \liminf_{n \to \infty} (\log n) \inf_{x \in G} P^x (T_x > n), \]
and 
\[ r_{\sup} := \limsup_{n \to \infty} (\log n) \sup_{x \in G} P^x (T_x > n). \]
\end{Def}
We prove in Lemma \ref{uc} that if  $G$ is uniformly weakly recurrent, then,  
\begin{equation}\label{posi-fi}
0 < r_{\textup{inf}} \le r_{\textup{sup}} < +\infty. 
\end{equation} 
For $G = \Z^2$, 
\[ r_{\textup{inf}} = r_{\textup{sup}} = \pi. \]

Let $R_n := |\{S_0, \dots, S_n\}|$, which is the main object of this paper. 

\begin{Thm}[Law of large numbers]\label{WLLN-UC}  
If $G$ is  uniformly weakly recurrent, then, we have the following:\\
(i) 
\[ \limsup_{n \to \infty} \frac{R_n}{n / \log n} \le r_{\sup}, \ \textup{ $P^x$-a.s.}  \]
(ii) 
For every $\epsilon > 0$, 
\begin{equation*}
\lim_{n \to \infty} P^x\left( \frac{R_n}{n / \log n} \le r_{\inf} - \epsilon \right) = 0. 
\end{equation*}
(iii)
\begin{equation*}
\liminf_{n \to \infty} \frac{R_n}{n / \log n} \ge \frac{r_{\inf}}{2}, \ \textup{ $P^x$-a.s.}  
\end{equation*}
(iv) 
\[ r_{\inf} \le \liminf_{n \to \infty} \frac{E^x [R_n]}{n / \log n} \le \limsup_{n \to \infty} \frac{E^x [R_n]}{n / \log n} \le r_{\sup}. \]
\end{Thm}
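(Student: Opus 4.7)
The plan is to base everything on the last-visit decomposition
$R_n = \sum_{k=0}^{n} Y_k^{(n)}$, where $Y_k^{(n)} := \mathbf{1}\{S_j \ne S_k \text{ for all } k < j \le n\}$.
The Markov property at time $k$ gives the identity
$E^x[R_n] = \sum_{k=0}^{n} E^x\bigl[P^{S_k}(T_{S_k} > n-k)\bigr]$.
The preparatory Lemma \ref{uc} asserts that under uniform weak recurrence, for every $\varepsilon > 0$ and all sufficiently large $m$, the uniform-in-$y$ inequalities
$(r_{\inf} - \varepsilon)/\log m \le P^y(T_y > m) \le (r_{\sup} + \varepsilon)/\log m$
hold. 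Substituting these into the identity above and invoking $\sum_{m=2}^n 1/\log m \sim n/\log n$ (the remaining $O(n^{1/2})$ terms with $n-k$ small contribute $o(n/\log n)$) directly yields the expectation statement (iv).

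For (ii), I would apply a second-moment argument. Expanding $R_n^2 = \sum_{k,\ell} Y_k^{(n)} Y_\ell^{(n)}$ and using the strong Markov property at the later of the two last-visit times, together with the same uniform upper bound on $P^y(T_y > m)$, one obtains $\mathrm{Var}(R_n) = o\bigl((n/\log n)^2\bigr)$. Combined with the lower half of (iv), Chebyshev's inequality gives (ii).

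The almost-sure statements (i) and (iii) follow by a subsequence plus Borel--Cantelli argument combined with monotonicity of $n \mapsto R_n$. For (i), I would follow the large-deviation strategy of \cite{O14}: control an exponential or high moment of $R_n$ by iterating the Markov property to obtain a tail bound of the form
$P^x\bigl(R_n > (r_{\sup}+\varepsilon) n/\log n\bigr) \le \exp(-c(\varepsilon) n/\log n)$,
which is easily summable along any geometric subsequence $n_k = \lfloor (1+\varepsilon)^k \rfloor$; monotonicity of $R_n$ then passes to all $n$, with the geometric spacing $(1+\varepsilon)$ absorbed by sending $\varepsilon \to 0$. For (iii), I would apply the variance bound from the proof of (ii) along the factor-$2$ sparse subsequence $n_k = 2^k$; this spacing is sufficient for Borel--Cantelli summability of $\sum_k \mathrm{Var}(R_{n_k})/(n_k/\log n_k)^2$, yielding $R_{n_k} \ge (r_{\inf} - \varepsilon) n_k/\log n_k$ eventually almost surely. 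For $n \in [n_k, n_{k+1}]$, monotonicity gives $R_n \ge R_{n_k} \ge (r_{\inf} - \varepsilon)(n/2)/\log n$, producing the factor $r_{\inf}/2$ in (iii).

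The main obstacle will be making the variance and exponential-moment bounds genuinely uniform in the starting vertex, since spatial homogeneity is unavailable; the only input one can use is \eqref{posi-fi}, which is exactly what uniform weak recurrence provides. The loss of the factor $1/2$ in (iii) is an artifact of interpolating over a factor-$2$ gap, a standard cost when only a second-moment estimate is available. Removing this factor requires the substantially more delicate self-intersection analysis reserved for Theorem \ref{Lower}, which imposes additional Ahlfors-regularity and heat-kernel assumptions precisely to control intersections of independent random walks.
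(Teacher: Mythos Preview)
Your plan for (i) matches the paper's: block the last-exit indicators at a mesoscopic scale $M_n$ and apply an exponential Chebyshev bound using the Markov property block-by-block. (One caveat: the achievable tail is of order $\exp\bigl(-c(\log n)^{1+\delta}\bigr)$, not $\exp(-cn/\log n)$; to recover the sharp constant $r_{\sup}$ one must take $\log M_n\sim\log n$, which leaves only polylogarithmically many blocks. This is still easily summable.) Your direct computation for (iv) is correct and in fact simpler than the paper's, which deduces (iv) from (i) and (ii).

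The genuine gap is in (ii) and (iii). You propose to show $\mathrm{Var}(R_n)=o\bigl((n/\log n)^2\bigr)$ by ``expanding $R_n^2$ and using the strong Markov property at the later of the two last-visit times, together with the uniform upper bound on $P^y(T_y>m)$.'' But the two last-visit indicators $Y_k^{(n)}$ and $Y_\ell^{(n)}$ \emph{both} depend on the entire trajectory up to time $n$; conditioning on $\mathcal F_\ell$ leaves a two-point avoidance probability $P^{S_\ell}(T_{S_k}>n-\ell,\,T_{S_\ell}>n-\ell)$, and the one-point bound $P^y(T_y>m)\le (r_{\sup}+\varepsilon)/\log m$ gives no useful control of the \emph{covariance}. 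The classical variance estimates of Jain--Pruitt and Le~Gall rely on translation invariance and Green's-function asymptotics specific to $\Z^2$; under uniform weak recurrence alone (only diagonal heat-kernel bounds) there is no obvious route to such two-point estimates. The paper explicitly avoids variance arguments for this reason and instead proves (ii) by a block/exponential-moment scheme parallel to (i), with a crucial extra ingredient: the uniform residual estimate $\sup_y P^y(M_n<T_y\le n)=O(\log\log n/\log n)$, which requires the full uniform condition \eqref{uc-ul} via Lemma~\ref{uc}(ii), not just \eqref{posi-fi}.

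Your plan for (iii) inherits this problem and adds another: even if $\mathrm{Var}(R_n)=o((n/\log n)^2)$ held, a bare $o(1)$ is not summable along $n_k=2^k$, so Borel--Cantelli would not apply. The paper's mechanism for (iii) is different and does not use variance at all: it splits $[0,n]$ into two halves and uses the Markov property to square the quantitative bound from (ii), namely $\sup_y P^y\bigl(R_{n/2}\le (r_{\inf}-\varepsilon)(n/2)/\log(n/2)\bigr)=O(\log\log n/\log n)$, obtaining $O\bigl((\log\log n/\log n)^2\bigr)$, which \emph{is} summable along a geometric subsequence. The factor $1/2$ thus arises from the two-piece splitting (one needs $\alpha>2$ so that $r_{\inf}/\alpha$ per full interval is below $r_{\inf}-\varepsilon$ per half), not merely from subsequence interpolation.
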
 

These assertions hold also for every Markov chain satisfying \eqref{uc-ul}.  
Assertions (i) and (ii) correspond to \cite[Theorem 1.2]{O14} which deal with the transient case.  
The outline of our proof of (i) and (ii) is same as  \cite[Theorem 1.2]{O14}, however, in this recurrent case the decay rates appearing in our proof are slower than the transient case and  we need a little more complicated arguments.   
By this reason, we cannot adopt the method used in the proof of  \cite[Proposition 5.8]{KuN} for the proof of (iii). 

Assertion (ii) is more difficult to show than (i). 
Our proof of (i) need to assume only \eqref{posi-fi} and do not need to assume \eqref{uc-ul}. 
However, we need \eqref{uc-ul} for the proof of (ii). 

Assertion (iii)  is weaker than a better estimate in Theorem \ref{Lower}, under some additional assumptions for $G$. 
However it plays a crucial role in the proof of Theorem \ref{Lower}.  

If $G =\Z^2$, then, the proof of the strong law of large numbers  in \cite{DE, JP70} depends on variance estimates.  
We show Theorem \ref{WLLN-UC} by certain tail estimates and do not use any variance estimates. 

It is now natural to ask some properties of the limit infimum and supremum of $\{R_n / (n/\log n)\}_n$.   
For this, we impose some conditions for graphs stronger than uniformly weakly recurrence. 

Let $\alpha, \beta > 1$. 
We say that $G$ is {\it Ahlfors $\alpha$-regular} if there exist two positive constants $c_1$ and $c_2$ such that for every $n \ge 1$ and every $x \in V(G)$, 
\begin{equation}\label{Ahlfors} 
c_1 n^{\alpha} \le |B(x,n)| \le c_2 n^{\alpha}. 
\end{equation} 

We say that $G$ satisfies {\it sub-Gaussian heat kernel upper bounds with walk dimension $\beta$}, that is, 
there exist four positive constants $c_3, c_4, c_5, \textup{ and } c_6$ such that
\begin{equation}\label{sGHK-upper} 
p_{n}(x,y) \le \frac{c_3}{|B(x,n^{1/\beta})|} \exp\left(- c_4 \frac{d(x,y)^{\beta/(\beta-1)}}{n^{1/(\beta-1)}}\right),  
\end{equation}
holds for every $n \ge 1$ and every $x, y \in V(G)$. 
 
We say that $G$ satisfies {\it sub-Gaussian heat kernel lower bounds with walk dimension $\beta$}, that is, 
\begin{equation}\label{sGHK-lower} 
p_{n}(x,y) + p_{n+1}(x,y) \ge \frac{c_5}{|B(x,n^{1/\beta})|} \exp\left(- c_6  \frac{d(x,y)^{\beta/(\beta-1)}}{n^{1/(\beta-1)}} \right), 
\end{equation}
holds for every $n \ge 1$ and every $x, y \in V(G)$ satisfying that $n \ge d(x,y)$. 

\begin{Thm}\label{Lower}
Let $G$ be a uniformly weakly recurrent graph. 
Assume that for some $d > 1$,  $G$ is Ahlfors $d$-regular and satisfies the sub-Gaussian heat kernel upper bounds with walk dimension $d$.  
Then, for every $x \in V(G)$, 
\begin{equation*}
\liminf_{n \to \infty} \frac{R_n}{n / \log n} \ge r_{\inf}, \ \textup{ $P^x$-a.s.}  
\end{equation*}
\end{Thm}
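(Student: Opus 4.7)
The plan is to reduce the sharp almost-sure lower bound on $R_n$ to a sharp almost-sure upper bound on the self-intersection count via Cauchy--Schwarz, and then to close the argument by feeding the weak a.s.\ lower bound of Theorem \ref{WLLN-UC}(iii) into Proposition \ref{self-intersect-main}. Writing $N_y(n) := |\{0 \le k \le n : S_k = y\}|$ and $J_n := |\{(i,j) : 0 \le i < j \le n,\ S_i = S_j\}|$, so that $\sum_y N_y(n) = n+1$ and $\sum_y N_y(n)^2 = (n+1) + 2 J_n$, Cauchy--Schwarz applied to the nonzero local times yields
\[
R_n \;\ge\; \frac{(n+1)^2}{(n+1) + 2 J_n}.
\]
Consequently the theorem reduces to showing that, $P^x$-almost surely,
\[
\limsup_{n \to \infty} \frac{2 J_n}{n \log n} \;\le\; \frac{1}{r_{\inf}}.
\]

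At the level of expectations, $E^x[J_n] = \sum_{0 \le i < j \le n} \sum_y p_i(x,y)\, p_{j-i}(y,y)$, and the uniformly weakly recurrent hypothesis alone only gives the crude bound $\sup_y p_k(y,y) \le C/k$. The Ahlfors regularity \eqref{Ahlfors} together with the sub-Gaussian upper bound \eqref{sGHK-upper} should be exactly the right tools to sharpen this, via an averaging argument, to $\sup_y k\, p_k(y,y) \le (1+o(1))/r_{\inf}$, whence $E^x[J_n] \le (1+o(1))\, n \log n/(2 r_{\inf})$. The function of Proposition \ref{self-intersect-main} is then to promote this mean estimate to a high-probability (and ultimately a.s.) statement about $J_n$ itself. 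The weak a.s.\ lower bound of Theorem \ref{WLLN-UC}(iii), and the weak in-probability lower bound of (ii), should enter there by excluding pathwise scenarios in which the local times $N_y(n)$ concentrate on a small set of heavily visited vertices --- scenarios that would otherwise prevent $J_n$ from concentrating about its mean.

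The inductive scheme ``with respect to magnitudes of bounds'' then serves to iteratively upgrade the a.s.\ lower bound on $R_n$: starting from the base $\liminf R_n/(n/\log n) \ge r_{\inf}/2$ given by Theorem \ref{WLLN-UC}(iii), each round uses the currently best a.s.\ lower bound on $R_n$ to sharpen the concentration in Proposition \ref{self-intersect-main}, and Cauchy--Schwarz then extracts an improved lower bound on $R_n$; iterating drives the constant up to $r_{\inf}$. The main obstacle, as I see it, lies inside Proposition \ref{self-intersect-main}: obtaining the \textit{sharp} constant $1/(2 r_{\inf})$ in the self-intersection bound and upgrading from mean to almost-sure control, on a graph that need not be vertex transitive. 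The Ahlfors regularity and sub-Gaussian heat kernel estimates are the substitutes for spatial homogeneity that power the averaging; the delicate point is that the correct constant emerges only when one combines these bounds with the a priori spreading information on $R_n$ supplied by the previous round of the induction, and it is precisely this interplay that distinguishes our argument from the variance-based methods of \cite{JP72-2, L, LR} and from the four-piece intersection trick of \cite{DE, JP70}.
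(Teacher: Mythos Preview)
Your Cauchy--Schwarz reduction is correct algebraically but loses a factor of two, so it cannot reach the sharp constant $r_{\inf}$. Indeed, already on $\Z^2$ one has $E[J_n] \sim (n/\pi)\log n$, hence $2J_n/(n\log n) \to 2/\pi = 2/r_{\inf}$ in mean (and in fact almost surely), not $1/r_{\inf}$. Your claimed sharpening $\sup_y k\,p_k(y,y) \le (1+o(1))/r_{\inf}$ does not follow from the hypotheses: Lemma~\ref{uc} only controls the \emph{sum} $\sum_{k\le m} p_k(y,y)$ via $P^y(T_y>m)\sum_{k\le m} p_k(y,y)\le 1$, yielding $\sum_{k\le m} p_k(y,y)\le (1+o(1))\log m/r_{\inf}$, and plugging this into $E^x[J_n]=\sum_i\sum_y p_i(x,y)\sum_{k\le n-i}p_k(y,y)$ gives exactly $E^x[J_n]\le (1+o(1))\,n\log n/r_{\inf}$, twice your claim. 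The underlying reason Cauchy--Schwarz is lossy is that the nonzero local times $N_y(n)$ are approximately geometric rather than constant, so equality never holds; the best one can squeeze out of $R_n\ge (n+1)^2/((n+1)+2J_n)$ is $r_{\inf}/2$, which is precisely Theorem~\ref{WLLN-UC}(iii).

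You have also misread Proposition~\ref{self-intersect-main}: it does not control $J_n$. It bounds the \emph{cardinality of the set} $\{S_0,\dots,S_n\}\cap\{S_n,\dots,S_{2n}\}$, i.e.\ the number of vertices visited by both halves of the path. This is the right object for the paper's argument, which is not Cauchy--Schwarz but inclusion--exclusion: writing $R_n \ge |\{S_0,\dots,S_{n/2}\}| + |\{S_{n/2},\dots,S_n\}| - |\{S_0,\dots,S_{n/2}\}\cap\{S_{n/2},\dots,S_n\}|$, one bounds the overlap term by Proposition~\ref{self-intersect-main} and is left with a sum of two independent copies of $R_{n/2}$. The induction ``on magnitudes of bounds'' then works as follows: assuming the tail bound $\sup_x P^x(R_m \le \frac{r_{\inf}}{1+4\epsilon/3}\cdot m/\log m)=O((\log m)^{-3/2})$, one decomposes the event $\{R_n \le \frac{r_{\inf}}{1+\epsilon}\cdot n/\log n\}$ according to whether the overlap is large (Proposition~\ref{self-intersect-main}), both halves are small (square of the $O(\log\log n/\log n)$ bound from \eqref{stronger}), or one half already exceeds $\frac{r_{\inf}}{1+\epsilon_2}\cdot (n/2)/\log n$ --- in which case the other half must be below $\frac{r_{\inf}}{1+4\epsilon/3}\cdot (n/2)/\log n$, invoking the inductive hypothesis. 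Iterating a finite number of steps from $1+a_k>2$ (where \eqref{2nd-2} applies) down to $1+\epsilon$ gives \eqref{2nd-full}, and the interpolation of Theorem~\ref{WLLN-UC}(iii) finishes. Your proposal would need to be rebuilt around this range-splitting rather than the pair-count $J_n$.
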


For $G = \Z^2$, by Theorems \ref{WLLN-UC} and \ref{Lower}, we have the strong law of large numbers  by \cite{DE}.

We remark that if $G$ is uniformly weakly recurrent, then, we also have the off-diagonal heat kernel upper bound \eqref{sGHK-upper} with $\beta = 2$. 
This fact is originally from \cite{HSC}. 
See also Woess \cite[Theorem 14.12]{W} and Barlow \cite[Theorem 6.29]{B17}. 


We now consider the values of $\displaystyle\limsup_{n \to \infty} \frac{R_n}{n / \log n}$ and $\displaystyle\liminf_{n \to \infty} \frac{R_n}{n / \log n}$. 
\begin{Thm}\label{SLLN} 
Assume that $(S_n, P^x)$ is the simple random walk on $G$. 
Assume that for some $d > 1$, $G$ is Ahlfors $d$-regular and satisfies the sub-Gaussian heat kernel upper and lower bounds with walk dimension $d$. 
Then, there exist two non-random constants $c_{\inf}$ and $c_{\sup}$ such that for every $x \in V(G)$, 
\[ \liminf_{n \to \infty} \frac{R_n}{n / \log n} = c_{\inf}, \ \textup{ $P^x$-a.s.}  \]
\[ \limsup_{n \to \infty} \frac{R_n}{n / \log n} = c_{\sup}, \ \textup{ $P^x$-a.s.}  \] 
Furthermore,
\begin{equation}\label{c-inf-Fatou-new} 
r_{\inf} \le c_{\inf} \le \liminf_{n \to \infty} \frac{E^x[R_n]}{n / \log n}. 
\end{equation}
\begin{equation}\label{sandwich-new}  
r_{\inf} \le c_{\inf} \le c_{\sup} \le r_{\sup}. 
\end{equation} 
\end{Thm}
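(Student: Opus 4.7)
The plan is to reduce everything to showing that $\liminf_{n \to \infty} R_n/(n/\log n)$ and $\limsup_{n \to \infty} R_n/(n/\log n)$ are, $P^x$-almost surely, deterministic constants which do not depend on the starting vertex $x$. Under the hypotheses, Ahlfors $d$-regularity combined with the two-sided sub-Gaussian bounds \eqref{sGHK-upper} and \eqref{sGHK-lower} applied at $x=y$ gives $p_n(x,x) + p_{n+1}(x,x) \asymp 1/n$ uniformly in $x \in V(G)$, so $G$ is uniformly weakly recurrent. Hence Theorem \ref{WLLN-UC} and Theorem \ref{Lower} both apply and yield
\[ r_{\inf} \le \liminf_{n \to \infty} \frac{R_n}{n/\log n} \le \limsup_{n \to \infty} \frac{R_n}{n/\log n} \le r_{\sup}, \quad P^x\text{-a.s.} \]
Once the liminf and limsup are identified with constants $c_{\inf}, c_{\sup}$ independent of $x$, the sandwich \eqref{sandwich-new} follows.

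For the constancy, I first check shift-invariance. Let $\theta_k$ denote the $k$-step shift on path space. Then $R_n \circ \theta_k = |\{S_k,\dots,S_{n+k}\}|$ satisfies $R_{n+k} - k \le R_n \circ \theta_k \le R_{n+k}$, and dividing by $n/\log n$ shows that, for every $t$, the events
\[ A_t = \left\{ \liminf_{n\to\infty} \frac{R_n}{n/\log n} \le t \right\}, \quad B_t = \left\{ \limsup_{n\to\infty} \frac{R_n}{n/\log n} \le t \right\} \]
are invariant under every $\theta_k$ off a single $P^x$-null set.

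Now, for any such shift-invariant event $A$, set $\phi(x) := P^x(A)$. The identity $1_A = 1_A \circ \theta_1$ and the Markov property give $\phi(x) = E^x[\phi(S_1)]$, so $\phi$ is a bounded harmonic function. The sub-Gaussian lower bound \eqref{sGHK-lower} ensures $P^x(T_y < \infty) > 0$ for every $x,y$ (irreducibility), while $\sum_n p_n(x,x) = \infty$ (from $p_n(x,x) \gtrsim 1/n$) gives recurrence. Hence $(\phi(S_n))_n$ is a bounded martingale whose trajectory visits every vertex infinitely often, forcing $\phi$ to be a constant $c$. Then for every $n$, $P^x(A \mid \F_n) = E^x[1_A \circ \theta_n \mid \F_n] = \phi(S_n) = c$, and L\'evy's upward theorem yields $1_A = c$ a.s., so $c \in \{0,1\}$. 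Applying this to $A_t$ and $B_t$ over rational $t$ produces the deterministic constants $c_{\inf}$ and $c_{\sup}$. The sandwich \eqref{sandwich-new} is then immediate from Theorem \ref{Lower} and Theorem \ref{WLLN-UC}(i), and the Fatou inequality in \eqref{c-inf-Fatou-new} follows from
\[ c_{\inf} = E^x\!\left[\liminf_{n \to \infty} \frac{R_n}{n/\log n}\right] \le \liminf_{n \to \infty} \frac{E^x[R_n]}{n/\log n}. \]

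The main obstacle, in my view, is the tail-triviality step. The harmonic-implies-constant argument for bounded harmonic functions on an irreducible recurrent chain is classical, but one has to verify carefully that shift-invariance of the liminf and limsup holds off a \emph{single} null set rather than a $k$-dependent one, and that recurrence alone---not positive recurrence---suffices to identify the limit constant across starting vertices. This is where the uniform Ahlfors and sub-Gaussian hypotheses are used in a way that goes beyond the mere heat-kernel comparability already exploited in Theorems \ref{WLLN-UC} and \ref{Lower}: they force genuine irreducibility and recurrence of the chain, which is what enables the bounded-harmonic-function argument to deliver a single deterministic limit independent of $x$.
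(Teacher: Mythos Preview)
Your argument is correct and takes a genuinely different route from the paper's. The paper establishes constancy of the liminf and limsup by noting that these random variables are measurable with respect to the tail $\sigma$-algebra $\bigcap_{n \ge 1} \sigma(S_k : k \ge n)$ and then invoking the Barlow--Bass zero-one law \cite[Theorem 8.4]{BB99}, which holds under the full two-sided sub-Gaussian heat kernel estimates via a coupling/parabolic Harnack argument. You instead observe that these quantities define \emph{shift-invariant} events, and then run the classical Liouville argument: $\phi(x) = P^x(A)$ is bounded harmonic, hence constant on an irreducible recurrent chain, and L\'evy's upward theorem forces $1_A$ to equal that constant almost surely. Your approach is more elementary and self-contained, and in fact uses strictly less of the hypotheses---only irreducibility (automatic for the simple random walk on a connected graph) and recurrence (already implied by the on-diagonal lower bound)---whereas the Barlow--Bass result genuinely needs the full sub-Gaussian package and applies even in transient settings. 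The paper's route has the advantage of being a one-line citation once the hypotheses are in place; yours has the advantage of making transparent exactly which structural features of the chain drive the $x$-independence.
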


We emphasize that $c_{\inf}$ and $c_{\sup}$ are independent from the starting point $x$ of the random walk. 
It is difficult to calculate concrete forms of $c_{\inf}$ and $c_{\sup}$.  
Theorem \ref{WLLN-UC} does not exclude the possibility that the limit of $\dfrac{E^x [R_n]}{n / \log n} $ or $\dfrac{R_n}{n / \log n}$ as $n \to \infty$ exists. 
We now give an example of graph for which the limit does {\it not} exist. 

\begin{Def}\label{def-fm} 
We say that $G^{\prime}$ is a {\it finite modification} of $G$ 
if 
there exist two finite subsets $D$ on the set of vertices of $G$ and $D^{\prime}$ on the set of vertices of $G^{\prime}$ 
such that there is a graph isomorphism (see Diestel  \cite[Section 1.1]{Di} for the definition of this terminology) $\phi : V(G) \setminus D  \to V(G^{\prime}) \setminus D^{\prime}$.    
As graph structures for $V(G) \setminus D$ and $V(G^{\prime}) \setminus D^{\prime}$, 
we consider the induced subgraphs of $G$ and $G^{\prime}$, respectively.   
We denote them by $G \setminus D$ and $G^{\prime} \setminus D^{\prime}$, respectively. 
\end{Def}  

We remark that if $G^{\prime}$ is a {\it finite modification} of $G$, then $G^{\prime}$ is roughly isometric to $G$. 
See Woess \cite[Definition 3.7]{W} for the definition of being roughly isometric.
We remark that $G \setminus D$ and $G^{\prime} \setminus D^{\prime}$ can be non-connected. (For example, consider the case $G$ and $G^{\prime}$ are trees.) 
However, in this paper, we consider finite modifications of two specific graphs such that its vertices is $\Z^2$ and its edges contains all nearest-neighbor edges, so $G \setminus D$ and $G^{\prime} \setminus D^{\prime}$ are always connected.    

\begin{center}
\begin{picture}(150,150)
\linethickness{1pt}
\multiput(0,0)(10, 0){5}{\line(0,1){140}}
\multiput(0,0)(0,10){5}{\line(1,0){140}}
\multiput(100,0)(10, 0){5}{\line(0,1){140}}
\multiput(0,100)(0,10){5}{\line(1,0){140}}
\multiput(100,0)(-10, 0){6}{\line(0,1){40}}
\multiput(0,100)(0,-10){6}{\line(1,0){40}}
\multiput(100,100)(-10, 0){6}{\line(0,1){40}}
\multiput(100,100)(0,-10){6}{\line(1,0){40}}
\put(40,70){\line(1,0){60}}
\put(70,60){\line(0,1){40}}
\put(70,80){\line(1,0){30}}
\put(40,60){\line(1,-1){20}}
\put(70,40){\line(1,1){30}}
\put(80,50){\line(0,1){20}}
\put(80,80){\line(0,1){10}}
\put(50,50){\line(1,0){30}}
\put(60,80){\line(0,1){20}}
\end{picture} 
\end{center} 
\begin{center}
an illustration of a finite modification of $\Z^2$
\end{center}

\begin{Thm}\label{fm-2d}
If $G$ is a finite modification of $\Z^2$, then,  for the simple random walk on $G$,  
\[ \lim_{n \to \infty} \frac{\log n}{n} E^o [R_n] = \pi. \]
\end{Thm}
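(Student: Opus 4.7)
The plan is to invoke Theorem~\ref{WLLN-UC}(iv), which sandwiches
\[
r_{\inf}(G) \;\le\; \liminf_{n \to \infty} \frac{E^o[R_n]}{n/\log n} \;\le\; \limsup_{n \to \infty} \frac{E^o[R_n]}{n/\log n} \;\le\; r_{\sup}(G).
\]
Hence the theorem reduces to two claims: (A) every finite modification $G$ of $\Z^2$ is uniformly weakly recurrent, so that Theorem~\ref{WLLN-UC}(iv) applies, and (B) $r_{\inf}(G) = r_{\sup}(G) = \pi$. Claim (A) is a stability statement: the uniform diagonal estimate $p_n(x,x)+p_{n+1}(x,x) \asymp n^{-1}$, classical on $\Z^2$, transfers to $G$ because Ahlfors $2$-regularity and the discrete parabolic Harnack inequality are both preserved under a finite modification, and together they characterize Gaussian heat kernel estimates by Delmotte's equivalence.

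For Claim (B), I would show $(\log n)\,P^x(T_x > n) \to \pi$ uniformly in $x \in V(G)$. The generating-function identity
\[
\sum_{n \ge 0} P^x(T_x > n)\, s^n \;=\; \frac{1}{(1-s)\,U^x(s)}, \qquad U^x(s) := \sum_{n \ge 0} p_n(x,x)\, s^n,
\]
combined with Karamata's Tauberian theorem and the monotonicity of $n \mapsto P^x(T_x > n)$, reduces the problem to the uniform Green function asymptotic
\[
G_n(x,x) \;:=\; \sum_{k=0}^n p_k(x,x) \;\sim\; \frac{\log n}{\pi} \qquad \text{uniformly in } x \in V(G).
\]
For $\Z^2$ this follows from the local central limit theorem $p_{2k}(x,x) \sim (\pi k)^{-1}$. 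For $G$, fix a finite set $\mathcal{B} \subset V(G)$ containing $D'$ and whose complement is graph-isomorphic to a cofinite subset of $\Z^2$, and decompose each loop at $x$ by whether or not it visits $\mathcal{B}$: loops avoiding $\mathcal{B}$ contribute identically in $G$ and in $\Z^2$ via the isomorphism, while loops visiting $\mathcal{B}$ are regrouped using the strong Markov property at the first entrance into and last exit from $\mathcal{B}$. Since $\mathcal{B}$ is finite and the hitting distribution of $\partial \mathcal{B}$ is controlled by the $\Z^2$ potential kernel, the second contribution is only an $O(1)$ additive correction to $G_n(x,x)$, so the leading constant $1/\pi$ survives.

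The main obstacle is to make the $O(1)$ correction \emph{uniform} in $x$. For $x$ with $d(x,\mathcal{B})$ large, the walk interacts with $\mathcal{B}$ through only logarithmically many excursions up to time $n$, each contributing a small term controlled via the potential kernel bound $a_{\Z^2}(x,y) \sim \pi^{-1} \log d(x,y)$. For $x$ inside or adjacent to $\mathcal{B}$, one exploits that the walk exits $\mathcal{B}$ in $O(1)$ expected time and with exit distribution on the finite set $\partial \mathcal{B}$, after which the far-from-$\mathcal{B}$ analysis applies via the strong Markov property. Combining both regimes establishes the uniform Green-function asymptotic, yielding Claim (B), and Theorem~\ref{WLLN-UC}(iv) then concludes the theorem.
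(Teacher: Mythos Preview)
Your reduction to Claim~(B), namely $r_{\inf}(G)=r_{\sup}(G)=\pi$, is not valid: this claim is false for finite modifications of $\Z^2$. Take $G$ to be $\Z^2$ together with one extra vertex $v$ attached only to the origin $0$ (so $\deg v=1$, $\deg 0=5$). A first-return decomposition at $0$ gives
\[
1-F^G_0(s)=\tfrac{1}{5}(1-s^2)+\tfrac{4}{5}\,\bigl(U^{\Z^2}(s)\bigr)^{-1},
\]
whence $U^G_0(s)\sim \tfrac{5}{4\pi}\log\frac{1}{1-s}$ and $G_n^G(0,0)\sim \tfrac{5}{4\pi}\log n$; moreover $p_k^G(v,v)=\tfrac15\,p_{k-2}^G(0,0)$, so $G_n^G(v,v)\sim \tfrac{1}{4\pi}\log n$. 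By Lemma~\ref{uc} this yields $(\log n)P^0(T_0>n)\to 4\pi/5$ and $(\log n)P^v(T_v>n)\to 4\pi$, hence
\[
r_{\inf}(G)\le \tfrac{4\pi}{5}<\pi<4\pi\le r_{\sup}(G).
\]
So Theorem~\ref{WLLN-UC}(iv) only traps $E^o[R_n]/(n/\log n)$ in $[4\pi/5,\,r_{\sup}(G)]$ and cannot yield the limit $\pi$. Your loop-decomposition heuristic fails precisely here: for $x\in\mathcal B$ every loop visits $\mathcal B$, and the local degree changes inside $\mathcal B$ alter the leading constant of $G_n(x,x)$, not just an $O(1)$ additive term.

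The paper avoids $r_{\inf},r_{\sup}$ entirely and estimates $E^o[R_n]=1+\sum_{y\neq o}P^o(T_y\le n)$ directly. The point is that the problematic vertices are finitely many; for $y$ outside a ball $B_G(o,2k_n)$ containing the modification, the ball $B_G(y,k_n)$ is graph-isomorphic to a $\Z^2$-ball, so $\sum_{i\le l}p_i^G(y,y)$ agrees with the $\Z^2$ value up to an error controlled by the exit probability from $B_G(y,k_n)$ (bounded via a moment inequality). Plugging this into \eqref{2d-hit-upper}--\eqref{2d-hit-lower} and using $\sum_y\sum_{i\le n}p_i^G(o,y)=n+1$ gives both bounds with constant $\pi$. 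The moral is that $E^o[R_n]$ feels only the \emph{typical} local Green function (which is the $\Z^2$ one), whereas $r_{\inf},r_{\sup}$ are sensitive to the \emph{extremal} vertices in the modified region.
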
 

We also show the following by using Theorem \ref{fm-2d}. 

\begin{Thm}\label{fluc-2d}
There exist a uniformly weakly recurrent graph $G$ and  a vertex $o$ of $G$ such that for the simple random walk on $G$, 
\begin{equation}\label{fluc-2d-p0}  
\liminf_{n \to \infty} \frac{\log n}{n} E^{o}[R_n]  <  \limsup_{n \to \infty} \frac{\log n}{n} E^{o}[R_n], 
\end{equation} 
and for some constants $c_{\inf}$ and $c_{\sup}$, 
\begin{equation}\label{SLLN-fluc}
0 < \liminf_{n \to \infty} \frac{\log n}{n} R_n = c_{\inf} < c_{\sup} = \limsup_{n \to \infty} \frac{\log n}{n} R_n  < +\infty, \ \textup{$P^o$-a.s.}
\end{equation}
\end{Thm}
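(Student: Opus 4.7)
The plan is to construct $G$ by alternating, at widely separated scales, between two Ahlfors $2$-regular model graphs whose range constants differ, so that the range $R_n$ on $G$ inherits the constant of the annulus ``active'' at scale $\sqrt{n}$; this produces fluctuation of $\frac{\log n}{n} E^o[R_n]$, and the almost-sure fluctuation \eqref{SLLN-fluc} follows from Theorem \ref{SLLN} together with uniform integrability.

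\textbf{Choice of model graphs and construction of $G$.} Pick a second model graph $H$, Ahlfors $2$-regular with two-sided sub-Gaussian heat kernel bounds of walk dimension $2$, for which $r_{\inf}(H) = r_{\sup}(H) =: \rho \neq \pi$; the triangular lattice is a concrete choice, giving $\rho = 2\pi/\sqrt{3}$ by a standard local CLT together with an Abelian-Tauberian computation relating $\lim_n n\,p_n(x,x)$ to $\lim_n (\log n) P^x(T_x > n)$. Fix a very rapidly growing sequence $r_k \to \infty$. Define $G$ by placing on the annulus $B(o, r_{2k}) \setminus B(o, r_{2k-1})$ an isomorphic copy of the corresponding $\Z^2$-annulus, and on $B(o, r_{2k+1}) \setminus B(o, r_{2k})$ an isomorphic copy of the corresponding $H$-annulus, joined through thin boundary rings of bounded degree. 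Standard Dirichlet-form comparisons, using the volume doubling and Poincar\'e inequalities shared by $\Z^2$ and $H$, ensure that $G$ is Ahlfors $2$-regular, admits two-sided sub-Gaussian heat kernel bounds of walk dimension $2$, and is uniformly weakly recurrent, so the hypotheses of Theorem \ref{SLLN} are met.

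\textbf{Fluctuation of expectations.} Choose $n_{2k}$ of order $r_{2k}^2$. By a sub-Gaussian exit estimate and a coupling with the auxiliary graph $\widetilde G_k$ obtained by replacing $G$ outside a suitably chosen ball $B(o, C r_{2k})$ with the canonical $\Z^2$-extension, one has $E^o_G[R_{n_{2k}}] = E^o_{\widetilde G_k}[R_{n_{2k}}] + o(n_{2k}/\log n_{2k})$. The graph $\widetilde G_k$ is a finite modification of $\Z^2$, so a quantitative refinement of Theorem \ref{fm-2d} (tracking how its error depends on the modification radius) combined with the rapid growth of $\{r_k\}$ yields $\lim_k \frac{\log n_{2k}}{n_{2k}} E^o_G[R_{n_{2k}}] = \pi$. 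The symmetric argument applied along $n_{2k+1}$ of order $r_{2k+1}^2$, with $\Z^2$ and $H$ interchanged and using the analogue of Theorem \ref{fm-2d} for finite modifications of $H$, gives $\lim_k \frac{\log n_{2k+1}}{n_{2k+1}} E^o_G[R_{n_{2k+1}}] = \rho \neq \pi$, establishing \eqref{fluc-2d-p0}.

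\textbf{Almost-sure fluctuation and main obstacle.} By Theorem \ref{SLLN}, $\liminf_n \frac{\log n}{n} R_n$ and $\limsup_n \frac{\log n}{n} R_n$ are $P^o$-almost surely equal to deterministic constants $c_{\inf} \le c_{\sup}$. If one had $c_{\inf} = c_{\sup}$, then $\frac{\log n}{n} R_n$ would converge almost surely to a constant; combined with uniform integrability of the family $\{\frac{\log n}{n} R_n\}$---a consequence of the tail concentration underlying the proof of Theorem \ref{WLLN-UC} together with the trivial bound $R_n \le n+1$---this would force convergence of $\frac{\log n}{n} E^o[R_n]$, contradicting \eqref{fluc-2d-p0}. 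Hence $c_{\inf} < c_{\sup}$, which yields \eqref{SLLN-fluc}. The main technical obstacle is the quantitative form of Theorem \ref{fm-2d} and its analogue for $H$: one must extract explicit control on how the error $\frac{\log n}{n} E^o[R_n] - \pi$ depends on the radius of the finite modification, so that with $\{r_k\}$ growing fast enough both the inner ``mess'' (radius $r_{2k-1}$) and the cut-off at the auxiliary boundary contribute negligibly to $E^o_G[R_{n_{2k}}]$. The rapid growth of $r_k$ decouples the regimes, but the bookkeeping in the quantitative estimate is delicate.
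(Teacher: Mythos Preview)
Your approach is essentially the same as the paper's: build $G$ by alternating, on widely separated annuli, between two planar model graphs with different range constants; deduce \eqref{fluc-2d-p0} from the finite-modification result Theorem~\ref{fm-2d} (and its analogue for the second model); and then obtain \eqref{SLLN-fluc} by the uniform-integrability contradiction argument you describe, which is exactly the paper's Lemma~\ref{SLLN-fluc-lem}.

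The one substantive difference is your choice of the second model. The paper does not use the triangular lattice but rather $\widetilde{\Z}^2$, the graph on vertex set $\Z^2$ with edges $\{x,y\}$ whenever $|x-y|_\infty = 1$; a local CLT gives $\lim_n n\,p_{2n}(0,0) = 2/(3\pi)$ and hence range constant $3\pi/2$. This choice is considerably more convenient than yours: since $\Z^2$ and $\widetilde{\Z}^2$ share the \emph{same vertex set} and $E(\Z^2)\subset E(\widetilde{\Z}^2)$, ``alternating annuli'' simply means deciding, on each annulus, whether or not to add the diagonal edges. There is no need to design boundary rings that glue a $\Z^2$-annulus to a triangular-lattice annulus, and the resulting graph is trivially roughly isometric to $\Z^2$, so uniform weak recurrence and the two-sided Gaussian heat-kernel bounds follow immediately from stability under rough isometry (Proposition~\ref{Nash-2nd} and \cite{BB04}), bypassing the ``Dirichlet-form comparisons'' you allude to. The paper then defers the remaining details of the fluctuation of expectations to the construction in \cite[Theorem~1.3]{O14}, rather than working through the quantitative version of Theorem~\ref{fm-2d} that you flag as the main obstacle. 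Your route would work, but the paper's choice of $\widetilde{\Z}^2$ removes most of the bookkeeping you anticipate.
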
 

In the course of the proof of this theorem, we show that $G$ in the statement of Theorem \ref{fluc-2d} is Ahlfors $2$-regular and satisfies the sub-Gaussian heat kernel estimates with walk dimension $2$.  
Hence $c_{\inf}$ and $c_{\sup}$ are the same constants as in Theorem \ref{SLLN}.

This paper is organized as follows. 
In Section 2, we show Theorem \ref{WLLN-UC}. 
In Section 3, we give some results for intersection of random walks on graphs. 
In Section 4, we show Theorem \ref{Lower} by using the results obtained in Section 3. 
In Section 5, which is shorter than any other sections,  we show Theorem \ref{SLLN}. 
In Section 6, we show Theorems \ref{fm-2d} and \ref{fluc-2d}.
Finally in Section 7, we give examples and applications. 


\section{Proof of Theorem \ref{WLLN-UC}}

Hereafter, for two infinite sequences $(f(n))_n$ and $(g(n))_n$ satisfying that $g(n) \ne 0$ for every $n$, 
$f(n) = O(g(n))$ means that \[ \limsup_{n \to \infty} \left| \frac{f(n)}{g(n)} \right| < +\infty,  \]
and, 
$f(n) = o(g(n))$ means that \[ \lim_{n \to \infty} \left|\frac{f(n)}{g(n)}\right|  = 0.  \]

\begin{Lem}\label{uc}
(i) Let $x \in G$. 
If there exist positive constants $c_x$ and $C_x$ such that for every $n \ge 1$,  
\[ c_x \le n (p_{n}(x,x) + p_{n+1}(x,x)) \le C_x,  \]
then, 
\begin{equation}\label{relation}
\lim_{n \to \infty} P^x (T_x > n) \sum_{k = 1}^{n} p_{k}(x,x) = 1. 
\end{equation} 
(ii) If \eqref{uc-ul} holds, 
then, the  convergence \eqref{relation}  is uniform with respect to $x$, and furthermore, \eqref{posi-fi} holds. \\
(iii) If $G$ is vertex-transitive and furthermore 
$$\lim_{k \to +\infty} k p_{2k}(x,x) = \frac{1}{r}$$ 
for some $x \in G$ and $r > 0$, 
then, $r = r_{\inf} = r_{\sup}$. 
\end{Lem}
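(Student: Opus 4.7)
The plan is to base all three parts on the last-visit decomposition
\begin{equation*}
1 \;=\; \sum_{i=0}^n p_i(x,x)\, P^x(T_x > n - i),
\end{equation*}
valid for every $n \ge 0$ and every $x$. It follows by decomposing the $P^x$-sure event $S_0 = x$ according to the time $i$ of the last visit to $x$ in $\{0,\ldots,n\}$ and applying the strong Markov property at time $i$.

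For (i), I would abbreviate $u_i = p_i(x,x)$, $U_n = \sum_{k=1}^n u_k$, $h(n) = P^x(T_x>n)$. Monotonicity of $h$ yields the free upper bound $h(n)(1+U_n)\le 1$ by replacing each $h(n-i)$ with $h(n)$. For the matching lower bound I would fix $\lambda \in (0,1)$ and split the identity at $i=\lfloor(1-\lambda)n\rfloor$: the hypothesis $u_i \le C_x/i$ gives
\begin{equation*}
\sum_{i>(1-\lambda)n}^{n} u_i \;\le\; C_x \log\tfrac{1}{1-\lambda} \;=\; O(\lambda),
\end{equation*}
so the head of the sum contributes at least $1-O(\lambda)$, and replacing $h(n-i)$ by $h(\lfloor\lambda n\rfloor)$ on the head yields $h(\lfloor\lambda n\rfloor)(1+U_{\lfloor(1-\lambda)n\rfloor})\ge 1-O(\lambda)$. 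Because $u_i+u_{i+1}\in[c_x,C_x]/i$ forces $U_n\asymp\log n\to\infty$, we have $U_{\lfloor(1-\lambda)n\rfloor}/U_n\to 1$ and, after the substitution $m=\lfloor\lambda n\rfloor$, also $U_{\lceil m/\lambda\rceil}/U_m\to 1$ for each fixed $\lambda$; letting $\lambda\downarrow 0$ delivers $\liminf_m h(m)U_m\ge 1$, matching the upper bound.

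Part (ii) follows because every implicit constant appearing in the argument for (i) depends only on the global $c, C$ in \eqref{uc-ul} and not on $x$, so the convergence in \eqref{relation} is automatically uniform in $x$; the uniform two-sided bound $c'\log n \le U_n(x) \le C'\log n + O(1)$ then pins $(\log n)\,P^x(T_x>n)$ into a fixed positive compact subinterval for all large $n$ uniformly in $x$, which is precisely \eqref{posi-fi}. For (iii), vertex-transitivity makes $p_n(x,x)$ and $P^x(T_x>n)$ independent of the basepoint, so $r_{\inf}$ and $r_{\sup}$ collapse to the lower and upper limits of $(\log n)\,P^x(T_x>n)$; the hypothesis $k\,p_{2k}(x,x)\to 1/r$ together with direct summation (using $p_{2k+1}(x,x)=0$ in the bipartite case, and the appropriate spectral comparison $p_{2k+1}\sim p_{2k}$ otherwise) shows $U_n\sim(\log n)/r$, after which part (i) gives $h(n)\sim 1/U_n\sim r/\log n$, so $r_{\inf}=r_{\sup}=r$.

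The main obstacle will be the lower bound in (i): the upper bound is immediate from monotonicity, but producing the matching lower bound requires the split-and-rescale device above, because the boundary correction $U_n - U_{\lfloor(1-\lambda)n\rfloor}$ is only bounded (not $o(1)$) in $n$ for fixed $\lambda$, and one must exploit $U_n\to\infty$ to wash it out after normalization.
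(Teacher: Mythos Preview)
Your argument is correct and rests on the same last-visit identity $\sum_{i=0}^n p_i(x,x)P^x(T_x>n-i)=1$ that the paper uses, with the same monotonicity upper bound and the same split-and-bound strategy for the lower bound. The only substantive difference is the choice of split point: you cut at $\lfloor(1-\lambda)n\rfloor$ for a fixed $\lambda\in(0,1)$ and send $\lambda\downarrow 0$ after $n\to\infty$, whereas the paper cuts at $n-\lfloor n/\log n\rfloor$ so that the tail $\sum_{k>n-\lfloor n/\log n\rfloor}p_k(x,x)=O(1/\log n)$ vanishes already as $n\to\infty$ with no second limit needed. Both routes prove the lemma as stated. The paper's choice, however, yields the explicit uniform rate
\[
\left|P^x(T_x>m)\sum_{k=0}^{m-1}p_k(x,x)-1\right|=O\!\left(\frac{\log\log m}{\log m}\right),
\]
labelled \eqref{uniform-error}, which is invoked later in the proof of Theorem~\ref{WLLN-UC}(ii) to control $(\log n)\sup_y P^y(M_n<T_y\le n)$. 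Your two-step $\lambda\to 0$ argument gives uniformity but, as written, no quantitative rate; if you want to recover the paper's downstream applications you would need to choose $\lambda=\lambda(n)\to 0$ (effectively reproducing the paper's cut). Your handling of (iii) is slightly more careful than the paper's, which simply appeals to ``the Abelian theorem'' without separating the bipartite and non-bipartite cases.
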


Assertion (iii) is applicable to the case that $G$ is $\mathbb{Z}^2$.  

\begin{proof} 
(i) Let $L^{(n)}_x$ be the last hitting time of $\{S_n\}_n$ to a point $x$ up to time $n$.  
Then by the Markov property,  
\[ P^x (L^{(n)}_x = k) = p_k (x,x) P^x (T_x > n -k), \ 0 \le k \le n.  \]
By using this and 
\[ \sum_{k = 0}^{n}  P^x (L^{(n)}_x = k) = 1,  \]
we have that 
\[ \sum_{k = 0}^{n} p_{k}(x,x)  P^x (T_x > n -k) = 1.  \]
Hence we have  that for {\it every} $n$, 
\begin{equation}\label{hit-hk-easy} 
P^x (T_x > n) \sum_{k = 0}^{n} p_{k}(x,x) \le 1, 
\end{equation}
and furthermore, 
\begin{equation}\label{hit-hk-difficult} 
P^x \left(T_x > \left\lfloor\frac{n}{\log n}\right\rfloor\right) \sum_{k =0}^{n - \left\lfloor n/\log n\right\rfloor -1} p_k (x,x) \ge 1 - \sum_{k = n - \left\lfloor n/\log n\right\rfloor}^{n} p_k (x,x). 
\end{equation}

By the upper bound in \eqref{uc-ul},  
\[ \lim_{n \to \infty} \sum_{k = n - \left\lfloor n/\log n\right\rfloor}^{n} p_k (x,x) = 0,  \]    
By this and \eqref{hit-hk-easy},  
we have  that 
\begin{equation}\label{pre-lower}  
\liminf_{n \to \infty} P^x \left(T_x >  \left\lfloor\frac{n}{\log n}\right\rfloor \right) \sum_{k = 0}^{n - \left\lfloor n/\log n\right\rfloor -1} p_{k}(x,x) \ge 1. 
\end{equation} 

By the lower estimate in \eqref{uc-ul}, 
\[ \liminf_{n \to \infty} \frac{1}{\log n} \sum_{k = 0}^{\left\lfloor n/\log n\right\rfloor -1} p_{k}(x,x) > 0. \]

By the upper estimate in \eqref{uc-ul}, 
\[ \limsup_{n \to \infty} \frac{1}{\log \log n} \sum_{k = \left\lfloor n/\log n\right\rfloor}^{n - \left\lfloor n/\log n\right\rfloor -1} p_{k}(x,x) < +\infty. \]
Hence, 
\begin{equation*}
\lim_{n \to \infty} \frac{ \sum_{k = 0}^{\left\lfloor n/\log n\right\rfloor -1} p_{k}(x,x)}{ \sum_{k = 0}^{n - \left\lfloor n/\log n\right\rfloor -1} p_{k}(x,x)} 
= \lim_{n \to \infty} \frac{1}{1 + \frac{\sum_{k = \left\lfloor n/\log n\right\rfloor}^{n - \left\lfloor n/\log n\right\rfloor -1} p_{k}(x,x)}{\sum_{k = 0}^{\left\lfloor n/\log n\right\rfloor -1} p_{k}(x,x)}} =  1.
\end{equation*}

By this and \eqref{pre-lower}, 
we have  that 
\[ \liminf_{n \to \infty} P^x \left(T_x > \left\lfloor\frac{n}{\log n}\right\rfloor\right) \sum_{k = 0}^{\left\lfloor n/\log n\right\rfloor - 1} p_{k}(x,x) \ge 1. \]
This and \eqref{hit-hk-easy} implies the assertion. 

(ii) By the uniformity assumption, there is a constant $C > 0$ such that  for every $n \ge 3$ and every $x \in V(G)$,  
\begin{equation}\label{hk-upper-uni} 
\sum_{k = n - \left\lfloor n/\log n \right\rfloor}^{n} p_k (x,x) \le  \frac{C}{\log n}, 
\end{equation} 
and,   
\begin{equation}\label{hk-upper-uni-2}  
P^x \left(T_x > \left\lfloor \frac{n}{\log n} \right\rfloor\right) \sum_{k = \left\lfloor n/\log n\right\rfloor}^{n - \left\lfloor n/\log n\right\rfloor-1} p_{k}(x,x) \le C \frac{\log\log n}{\log n - \log\log n}. 
\end{equation} 

Then, by \eqref{hk-upper-uni} and \eqref{hit-hk-difficult},  
\[ P^x \left(T_x > \left\lfloor\frac{n}{\log n}\right\rfloor\right) \sum_{k =0}^{\left\lfloor n/\log n\right\rfloor -1} + \sum_{k = \left\lfloor n/\log n\right\rfloor}^{n - \left\lfloor n/\log n\right\rfloor-1} p_k (x,x) \ge 1 -   \frac{C}{\log n}. \]
By this and \eqref{hk-upper-uni-2},  
\[ P^x \left(T_x > \left\lfloor\frac{n}{\log n}\right\rfloor\right) \sum_{k =0}^{\left\lfloor n/\log n\right\rfloor-1} p_k (x,x) \ge 1 -   \frac{C}{\log n} - C \frac{\log\log n}{\log n - \log\log n}. \]
Thus we have 
\begin{align}\label{uniform-error} 
\left| P^x \left(T_x > \left\lfloor\frac{n}{\log n}\right\rfloor\right)  \sum_{k =0}^{\left\lfloor n/\log n\right\rfloor-1} p_k (x,x)  -1 \right| &\le O\left(\frac{\log \log n}{\log n}\right) \notag\\
&=  O\left(\frac{\log \log \left\lfloor n/\log n\right\rfloor}{\log \left\lfloor n/\log n\right\rfloor}\right). 
\end{align} 

(iii) By the Abelian theorem, we have that 
\[ \lim_{n \to \infty} \frac{1}{\log n} \sum_{k = 1}^{n} p_{k}(x,x) = \frac{1}{r}. \]
The assertion follows from this and (i). 
\end{proof}

The following proof is similar to the proof of \cite[Theorem 1.2]{O14}. 
Informally speaking, the reason we can use the techniques of the proof of \cite[Theorem 1.2]{O14}  is that weakly recurrent graphs are ``close" to transient graphs. 
Our methods are not  applicable to strongly recurrent graphs.    

\begin{proof}[Proof of Theorem \ref{WLLN-UC} (i)]  
Fix $x \in G$ and $\epsilon > 0$. 
Let 
\[ Y_{i, j} := 1_{\{S_i \ne S_{k}, \ i < k \le i+j\}}. \]

Let $M = M_n$ be an integer specified later such that $M_n = o (n/\log n)$.   
By the last exit decomposition,   
\[ R_{n} = 1 + \sum_{i=0}^{n-2} Y_{i, n-1-i}
\le M_n + \sum_{i=0}^{n-1-M_n} Y_{i, n-1-i}
\le  M_n + \sum_{i=0}^{n-1-M_n} Y_{i, M_n}. \]
Hence, for large $n$, by using $M_n < n\epsilon/(2\log n)$, 
\begin{align*} 
P^x \left(R_{n} \ge (n/\log n) (r_{\sup} + \epsilon)\right) 
&\le P^x \left(\sum_{i=0}^{n-1-M_n} Y_{i, M_n} > \frac{n}{\log n} \left(r_{\sup}+  \frac{\epsilon}{2}\right)\right) \\
&= P^x \left(\sum_{a=0}^{M_n} \sum_{i \equiv a \!\!\!\!\mod(M_n +1)}Y_{i, M_n} > \frac{n}{\log n} \left(r_{\sup} +  \frac{\epsilon}{2}\right) \right)\\
&\le \sum_{a=0}^{M_n} P^x  \left(\sum_{i \equiv a \!\!\!\!\mod(M_n +1)}Y_{i, M_n} > \frac{n}{(M_n +1) \log n}\left(r_{\sup} + \frac{\epsilon}{2}\right)\right). 
\end{align*}

For every $t > 0$, by the exponential Chebyshev inequality, we have that for each integer $a$, 
\[ P^x \left(\sum_{i \equiv a \!\!\!\!\mod(M_n +1)}Y_{i, M_n} >  \frac{n/ \log n}{M_n +1}\left(r_{\sup} + \frac{\epsilon}{2}\right)\right) \]
\begin{equation}\label{sharp} 
\le \exp\left(-t  \frac{n}{(M_n +1)\log n}\left(r_{\sup} + \frac{\epsilon}{2}\right) \right)
E^x \left[\exp\left( t \sum_{i \equiv a \!\!\!\!\mod(M_n +1)}Y_{i, M_n} \right)\right].  
\end{equation}

By the Markov property of $\{S_{n}\}_{n}$,
\begin{align*}
E^x \left[\exp\left(t\sum_{i \equiv a \!\!\!\!\mod(M_n +1)}Y_{i, M_n} \right)\right]
&= E^x \left[ \prod_{i \equiv a \!\!\!\!\mod(M_n +1)} \exp(tY_{i, M_n})\right]\\
&\le \sup_{y \in G} E^y \left[\exp(tY_{0, M_n}) \right]^{n/(M_n +1)}\\
&= \left(1+ (\exp(t)-1)  \sup_{y \in G} P^y (T_{y} > M_n)  \right)^{n/(M_n +1)}.
\end{align*}

If $M_n$ is large, then, by Definition \ref{infsup}, 
\[ (\log M_n)  \sup_{y \in G} P^y (T_{y} > M_n)  \le r_{\sup} + \frac{\epsilon}{4} < +\infty. \]

For every $t \ge 0$,  we have that 
\[ E^x \left[\exp\left(t\sum_{i \equiv a \!\!\!\!\mod(M_n +1)}Y_{i, M_n} \right)\right]
\le \left(1+ (\exp(t)-1) \left(\frac{ r_{\sup} + \frac{\epsilon}{4}}{\log M_n} \right) \right)^{n/(M_n+1)}.\]

Hence, 
the right hand side of the inequality \eqref{sharp}  
is less than or equal to 
\begin{equation*} 
\left[ \exp\left(- \frac{t}{\log n}\left( r_{\sup} + \frac{\epsilon}{2}\right) \right)  \left(1+ (\exp(t)-1) \left(\frac{ r_{\sup} + \frac{\epsilon}{4}}{\log M_n} \right) \right) \right]^{n/(M_n +1)}. 
\end{equation*}

Let $t_1 > 0$ such that 
\[ \exp(t_1) \le 1 + \left(1+\frac{\epsilon}{20 r_{\sup}}\right) t_1. \]

Assume that $\epsilon \in (0, r_{\sup})$.  
If we let $M_n := n  / (\log n)^{2+\delta}$ for some $\delta > 0$, then, for large $n$, 
\[  1+ (\exp(t_1)-1)\left(\frac{ r_{\sup} + \frac{\epsilon}{4}}{\log M_n}\right) \le \exp\left(\frac{ r_{\sup}  + \frac{\epsilon}{4}}{\log M_n} \left(1+\frac{\epsilon}{20 r_{\sup}}\right) t_1\right) \le \exp\left(\frac{t_1}{\log n}\left( r_{\sup} + \frac{\epsilon}{3}\right) \right).  \] 

We have 
\[ (M_n +1)  \exp\left(- \frac{t_1 n}{M_n \log n}\frac{\epsilon}{6}\right) \]
\begin{equation}\label{decay-fast-1}
= \frac{n}{(\log n)^{2+\delta}} \exp\left(-  t_1 (\log n)^{1+\delta} \frac{\epsilon}{6} \right) \to 0, \ n \to \infty. 
\end{equation}
Thus assertion (i) holds. 
\end{proof}

\begin{proof}[Proof of Theorem \ref {WLLN-UC} (ii)]
First, 
\[ P^x \left(\frac{R_{n}}{n/\log n} < r_{\inf} - \epsilon \right) \le  P^x \left(\sum_{i=0}^{n-1} Y_{i, n-1} < \frac{n}{\log n} (r_{\inf} -  \epsilon)\right)\]
\begin{equation}\label{decompose}  
\le P^x \left(\sum_{i=0}^{n-1} Y_{i, M_n} < \frac{n}{\log n} \left(r_{\inf} -  \frac{\epsilon}{2}\right)\right) +  P^x \left(\sum_{i=0}^{n-1} Y_{i, M_n} - Y_{i, n-1} > \frac{n}{\log n} \frac{\epsilon}{2} \right)  \end{equation} 

Consider the first term of \eqref{decompose}. 
It holds that 
\[ P\left(\sum_{i=0}^{n-1} Y_{i, M_n} < \frac{n}{\log n} \left(r_{\inf} -  \frac{\epsilon}{2}\right)\right) \]
\[\le \sum_{a=0}^{M} P \left( \sum_{i \equiv a \!\!\!\!\mod(M_n + 1)}  Y_{i, M_n} < \frac{n/ \log n}{M+1}\left(r_{\inf} - \frac{\epsilon}{2}\right)\right). \]

For each $a$, 
\[ P^x \left(\sum_{i \equiv a \!\!\!\!\mod(M_n + 1)}Y_{i, M} < \frac{n/ \log n}{M_n + 1}\left(r_{\inf} - \frac{\epsilon}{2}\right)\right) \]
\[ \le \exp\left(t  \frac{n/ \log n}{M_n +1}\left(r_{\inf} - \frac{\epsilon}{2}\right) \right)
E^x \left[\exp\left( - t\sum_{i \equiv a \!\!\!\!\mod(M+1)}Y_{i, M_n} \right)\right] \]
\[ \le \exp\left(t  \frac{n/ \log n}{M_n +1}\left(r_{\inf} - \frac{\epsilon}{2}\right) \right)\left(1+ (\exp(-t)-1)  \inf_y P^y (T_{y} > M_n)  \right)^{n/(M_n +1)} \]
\[ \le \exp\left( \frac{n}{M_n +1} \left( t \frac{r_{\inf} - \frac{\epsilon}{2}}{\log n} - (1-\exp(-t)) \frac{r_{\inf} - \frac{\epsilon}{4}}{\log M} \right)   \right).\]
In the second inequality we have used the Markov property. 
In the third inequality we have used Definition \ref{infsup} and the inequality $1+x \le \exp(x)$. 

If $t > 0$ is sufficiently small, then, 
\[ t \left(r_{\inf} - \frac{\epsilon}{2}\right) - (1-\exp(-t)) \left(r_{\inf} - \frac{\epsilon}{4}\right) < 0. \]

As before, let $M_n := n / (\log n)^{2+\delta}$ for some $\delta > 0$. 
Then,  
\[ M_n P^x \left(\sum_{i \equiv a \!\!\!\!\mod(M_n + 1)}Y_{i, M_n} < \frac{n/ \log n}{M_n + 1}\left(r_{\inf} - \frac{\epsilon}{2}\right)\right) \]
\begin{equation}\label{fast-lower} 
\le n \exp\left( (\log n)^{1+\delta} \left( t \left(r_{\inf} - \frac{\epsilon}{2}\right) - (1-\exp(-t)) \left(r_{\inf} - \frac{\epsilon}{4}\right) \right)   \right) \to 0. 
\end{equation} 

Consider the second term of \eqref{decompose}. 
\begin{align*} 
P^x \left(\sum_{i=0}^{n-1} Y_{i, M_n} - Y_{i, n-1} > \frac{n \epsilon}{ 2\log n}  \right) 
&\le \frac{2 \log n}{\epsilon} \sup_{y \in V(G)} E^y \left[Y_{0, M_n} - Y_{0, n-1} \right] \\
&\le \frac{2 \log n}{\epsilon} \sup_{y \in V(G)} P^y (M_n < T_y \le n). 
\end{align*} 
By Lemma \ref{uc} and \eqref{uniform-error},  we have  that 
\[  (\log n) \sup_{y \in V(G)} P^y (M_n < T_y \le n) = O\left(\frac{\log\log n}{\log n}\right). \]
and hence 
\[ \lim_{n \to \infty} (\log n) \sup_{y \in V(G)} P^y (M_n < T_y \le n) = 0. \] 
By this, \eqref{fast-lower} and \eqref{decompose}, 
\begin{equation}\label{stronger}
 P^x \left(\frac{R_{n}}{n/\log n} < r_{\inf} - \epsilon \right)= O\left(\frac{\log\log n}{\log n}\right), 
 \end{equation}
and hence, the proof of (ii) is now completed. 
\end{proof}

\begin{proof}[Proof of  Theorem \ref{WLLN-UC} (iii)]
Assume $\alpha > 2$. 
Then, by the Markov property, 
\begin{align}\label{2nd-2}
P^x \left(\frac{R_n}{n/\log n} \le \frac{r_{\inf}}{\alpha}\right) 
&\le  P^x \left(\bigcap_{k = 1}^{2} \left\{ \left|  \left\{ S_{(k-1) \left\lfloor\frac{n}{2}\right\rfloor}, \dots, S_{k \left\lfloor \frac{n}{2} \right\rfloor} \right\} \right| \le  \frac{r_{\inf}}{\alpha}\frac{n}{\log n} \right\} \right) \notag\\
&\le  \sup_{y \in M} P^y \left(R_{\left\lfloor\frac{n}{2}\right\rfloor} \le  \frac{r_{\inf}}{\alpha}\frac{ n}{\log n} \right)^{2} \notag\\
&= O\left( \left( \frac{\log\log n}{\log n} \right)^2 \right),  
\end{align} 
where in the second inequality we have used the Markov property and in the last estimate we have used \eqref{stronger}.  

Hence
we have that for every $\eta > 0$, 
by the Borel-Cantelli lemma, 
we have that $P^x$-a.s., for sufficiently large $k$, 
\[ R_{\left\lfloor \exp( \eta k)  \right\rfloor} >  \frac{r_{\inf}}{\alpha} \frac{\exp(\eta k)}{ \eta k }. \]

If $\left\lfloor \exp( \eta k)  \right\rfloor \le n < \left\lfloor \exp( \eta (k+1))  \right\rfloor$, 
then, by using the assumption that $f$ is non-decreasing, 
\[ \frac{R_{n}}{n/\log n}  >  \frac{r_{\inf}}{\alpha} \frac{ R_{\left\lfloor \exp( \eta k)  \right\rfloor}}{\exp( \eta (k+1))}\eta k > \frac{r_{\inf}}{\alpha \exp(\eta)}.  \]

Hence, 
\[ \liminf_{n \to  \infty} \frac{R_{n}}{n/\log n}  > \frac{r_{\inf}}{\alpha \exp(\eta)}, \ \textup{ $P^x$-a.s.} \]
Since $\eta > 0$ is taken arbitrarily, 
 \[ \liminf_{n \to  \infty} \frac{R_{n}}{n/\log n}  \ge  \frac{r_{\inf}}{\alpha}, \ \textup{ $P^x$-a.s.} \]
 
Now we obtain the assertion by optimizing $\alpha$. 
\end{proof}

\begin{proof}[Proof of Theorem \ref{WLLN-UC} (iv)]
Let $\epsilon > 0$.  
By using that $R_n \le n$, we have that 
\[  \frac{E^x[R_n]}{n/\log n} \le (r_{\sup} + \epsilon)  P^x \left( \frac{R_n}{n/\log n} \le r_{\sup} + \epsilon \right) + (\log n) P^x \left( \frac{R_n}{n/\log n} > r_{\sup} + \epsilon \right). \]
By the estimate \eqref{decay-fast-1} in the final part of the proof of Theorem \ref {WLLN-UC} (i), 
\[ \lim_{n \to \infty} (\log n) P^x \left( \frac{R_n}{n/\log n} > r_{\sup} + \epsilon \right) = 0.  \]
Hence,
\[  \limsup_{n \to \infty} \frac{E^x[R_n]}{n/\log n} \le r_{\sup} + \epsilon. \]
By Theorem \ref {WLLN-UC} (ii), 
\[ \liminf_{n \to \infty} \frac{E^x[R_n]}{n/\log n} \ge (r_{\inf} - \epsilon) \liminf_{n \to \infty} P^x \left( \frac{R_n}{n/\log n} \ge r_{\inf} - \epsilon \right) = r_{\inf} - \epsilon. \]
Hence, 
\[  \liminf_{n \to \infty} \frac{E^x[R_n]}{n/\log n} \ge r_{\inf} - \epsilon. \]
Since $\epsilon$ is taken arbitrarily, we have the assertion. 
\end{proof} 


\section{Self-intersections of random walk on graph} 

In this section we consider  self-intersection of random walks.  
On the integer lattices, this object has been considered by  many papers. 
Our framework does not have spatial homogeneity and  the techniques are not applicable to our case, in particular, we need to distinguish self-intersection of a single random walk with intersections of two independent random walks starting at a common point.      
We deal with the latter case in Appendix. 
{\it Throughout this and the following sections, we always suppose the assumption of Theorem \ref{Lower}. } 

\begin{Prop}\label{self-intersect-main}
There exists a positive constant $C$ such that for every $\epsilon > 0$ and every $x \in V(G)$,
\[ P^{x}\left( \left| \{S_0, \dots, S_n\} \cap  \{S_n, \dots, S_{2n}\}  \right| \ge \frac{\epsilon n}{\log n}\right) \le \frac{C}{\epsilon^2 (\log n)^{3/2}} \]
\end{Prop}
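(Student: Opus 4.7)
The approach is the second-moment method. Write
\[
V_1 := \{S_0,\dots,S_n\}, \quad V_2 := \{S_n,\dots,S_{2n}\}, \quad I := |V_1 \cap V_2|.
\]
Chebyshev's (squared-Markov) inequality gives
\[
P^x\!\left(I \ge \frac{\epsilon n}{\log n}\right) \le \frac{(\log n)^2\, E^x[I^2]}{\epsilon^2\, n^2},
\]
so it suffices to prove a uniform bound
\[
E^x[I^2] \le C\, n^2/(\log n)^{7/2}.
\]
This is the order of magnitude one expects: under the hypotheses of Theorem \ref{Lower}, the Erd\H{o}s--Taylor heuristic extended to our Ahlfors $d$-regular, sub-Gaussian heat-kernel setting gives $E^x[I] \asymp n/(\log n)^2$, so a bound of this order on the second moment is consistent with typical behavior.

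To compute the second moment, condition on $\F_n$. By the Markov property at time $n$, given $S_n$ the segment $V_2$ is an independent copy of the walk of length $n$ started at $S_n$, whence
\[
E^x[I^2] = \sum_{y,z \in V(G)} E^x\!\left[1_{\{y,z \in V_1\}}\, P^{S_n}(T_y \le n,\, T_z \le n)\right].
\]
Splitting the joint hitting event according to whether $y$ or $z$ is visited first and invoking the strong Markov property,
\[
P^w(T_y,T_z \le n) \le P^w(T_y \le n) P^y(T_z \le n) + P^w(T_z \le n) P^z(T_y \le n).
\]
The Ahlfors $d$-regularity together with the sub-Gaussian heat kernel upper bound \eqref{sGHK-upper} give the usual Green's function estimates $G^{(n)}(y,y) \asymp \log n$ and $G^{(n)}(w,y) \lesssim \log(n/(d(w,y)^d \vee 1))$ for $d(w,y)^d \le n$, with sub-Gaussian decay beyond. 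Combined with the standard comparison $P^w(T_y \le n) \lesssim G^{(n)}(w,y)/\log n$, these supply the basic hitting-probability bounds needed for the double sum.

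The main obstacle will be to extract the extra $(\log n)^{-1/2}$ beyond the naive estimate. Applying $\sum_z P^y(T_z \le n) = E^y[R_n] \lesssim n/\log n$ alone yields only
\[
E^x[I^2] \lesssim (n/\log n)\, E^x[I] = O(n^2/(\log n)^3),
\]
short by a factor $(\log n)^{1/2}$. To recover this, one must exploit that $z$ ranges over the random set $V_1$ rather than all of $V(G)$: the quantity $\sum_{z\in V_1} P^y(T_z \le n)$ is itself an intersection count---the expected overlap of $V_1$ with the range of an independent walk of length $n$ from $y$---and heuristically has mean of order $n/(\log n)^2$, not $n/\log n$. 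Making this precise requires the intersection estimates for two independent random walks starting at a common point developed in the Appendix (or a close analogue). Combining these with the decomposition above should deliver $E^x[I^2] \le C\, n^2/(\log n)^{7/2}$, after which Chebyshev's inequality completes the proof.
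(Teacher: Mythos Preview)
Your overall strategy---Chebyshev applied to $E^x[I^2]$---is exactly the paper's: Proposition~\ref{self-intersect-main} is reduced to Proposition~\ref{self-intersect-expectation}, which gives the stronger bound $E^x[I^2]\le Cn^2(\log\log n)^2/(\log n)^4$; since $(\log\log n)^2\le (\log n)^{1/2}$ for large $n$, this yields the stated $C/(\epsilon^2(\log n)^{3/2})$. Your weaker target $E^x[I^2]\le Cn^2/(\log n)^{7/2}$ would also suffice, but your sketch for reaching it has a real gap.

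The problem is the step where you replace $\sum_{z\in V_1}P^y(T_z\le n)$ by its ``typical'' size $n/(\log n)^2$. This quantity is a random variable (it depends on $V_1$), and in your decomposition it is multiplied by $1_{\{y\in V_1\}}P^{S_n}(T_y\le n)$, which is correlated with it through $V_1$ and $S_n$. You cannot substitute the mean without controlling these correlations, and that control is precisely the content of the second-moment estimate you are trying to prove. Moreover, the Appendix treats two \emph{independent} walks started at a \emph{common} point, whereas here $V_1$ is a walk from $x$ and the auxiliary walk starts from an arbitrary $y\in V_1$; the Appendix result does not apply directly, and ``a close analogue'' is not supplied.

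The paper's proof of Proposition~\ref{self-intersect-expectation} avoids this circularity entirely. After writing $E^x[I^2]$ as the triple sum over $y_1,y_2,z$ (your display, which is \eqref{self-intersect-sum}), it does \emph{not} try to re-factor into intersection counts. Instead it partitions the pairs $(y_1,y_2)$ into four regions according to whether $d(y_1,z)$ and $d(y_1,y_2)$ exceed the threshold $(n/(\log n)^4)^{1/d}$, and bounds each region directly using the pointwise hitting estimate of Lemma~\ref{hitting-self}: $P^x(T_y\le n)\le C\log\log n/\log n$ whenever $d(x,y)\ge (n/(\log n)^4)^{1/d}$. That lemma---proved from the sub-Gaussian upper bound \eqref{sGHK-upper} and killed Green function bounds---is the engine that produces the extra logarithmic savings, with no appeal to independent-walk intersection results. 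This is the missing ingredient in your argument.
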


As in \cite{L}, the estimate in the following proposition is attributed to obtaining an upper bound for the expectation of the intersection. 

\begin{Prop}\label{self-intersect-expectation}
There exists a positive constant $C$ such that for every $x \in V(G)$ and $n \ge 3$, 
\[  E^{x}\left[ \left| \{S_0, \dots, S_n\} \cap  \{S_n, \dots, S_{2n}\}   \right|^2 \right] \le C \frac{n^2 (\log\log n)^2}{(\log n)^4}. \]
\end{Prop}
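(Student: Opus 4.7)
My plan is to expand $E^x[I_n^2] = \sum_{y_1, y_2 \in V(G)} P^x(y_1, y_2 \in R_1 \cap R_2)$, where $R_1 := \{S_0, \ldots, S_n\}$ and $R_2 := \{S_n, \ldots, S_{2n}\}$, and then bound each summand using the Markov property together with the heat kernel estimates available under the hypotheses of Theorem \ref{Lower}.

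First, applying the Markov property at time $n$ gives
\[ P^x(y_1, y_2 \in R_1 \cap R_2) = E^x\!\left[1_{\{y_1, y_2 \in R_1\}}\, P^{S_n}(y_1, y_2 \in R_n)\right]. \]
Splitting the second-half factor $P^z(y_1, y_2 \in R_n)$ by which of $y_1, y_2$ is hit first and applying the strong Markov property yields the bound $h_n(z, y_1) h_n(y_1, y_2) + h_n(z, y_2) h_n(y_2, y_1)$, where $h_n(u, v) := P^u(T_v \le n)$. This reduces the problem to estimating, for each $i \in \{1, 2\}$, the quantity $P^x(y_1, y_2 \in R_1, y_i \in R_2)$.

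Second, I would bound this probability by ordering the first hits of $y_1, y_2$ in $[0, n]$ and applying the strong Markov at the later one. When $y_i$ is hit first and $y_j$ (with $j \ne i$) is hit second, the subsequent event $\{y_i \in R_2\}$ is dominated by $h_{2n}(y_j, y_i) \asymp h_n(y_j, y_i)$, giving a contribution of order $h_n(x, y_i) h_n(y_i, y_j)^2$. When the order is reversed, the required return probability $P^{y_i}(\exists k \in [m, m+n] : S_k = y_i)$ is bounded by $C\log(1 + n/m)/\log n$ for $m \ge 1$ (via Lemma \ref{uc} under the uniform weak recurrence assumption), and integrating this log-correction against the essentially log-uniform distribution of $T_{y_i}$ under $P^{y_j}$ produces a contribution of order $h_n(x, y_j) h_n(y_j, y_i)/\log n$, supplying the crucial extra factor of $1/\log n$. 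Combining the orderings yields the pointwise bound
\[ P^x(y_1, y_2 \in R_1 \cap R_2) \lesssim \bigl(h_n(y_1, y_2)^3 + h_n(y_1, y_2)^2/\log n\bigr)\bigl(h_n(x, y_1) + h_n(x, y_2)\bigr), \]
with $(\log\log n)^2$ corrections emerging as slack from the time integrations.

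Finally, I would convert $h_n$-factors to Green's function factors via the equivalence $h_n(u, v) \asymp G_n(u, v)/\log n$ with $G_n(u, v) := \sum_{k = 0}^{n} p_k(u, v)$ (which follows from the decomposition $G_n(u, v) = E^u[1_{\{T_v \le n\}}\, G_{n - T_v}(v, v)]$ together with $G_n(v, v) \asymp \log n$ from \eqref{uc-ul}), and sum over $(y_1, y_2)$ using Ahlfors $d$-regularity and the sub-Gaussian heat kernel upper bound \eqref{sGHK-upper} with $\beta = 2$. Direct computations using $G_n(u, v) \asymp \log_{+}(n/d(u, v)^2)$ on the ball of radius $\sqrt n$ give $\sum_{y_1, y_2} G_n(x, y_1) G_n(y_1, y_2)^k G_n(x, y_2) \lesssim n^2$ for $k \ge 2$, yielding the claimed bound $E^x[I_n^2] \le C n^2 (\log\log n)^2/(\log n)^4$. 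The main obstacle is the second step: the naive estimate $h_n(S_n, y_i) \le 1$ loses an entire power of $\log n$ and gives only $n^2/(\log n)^3$, so one must carefully combine the return-probability bound with the log-uniform structure of the hitting-time distribution to recover the additional $1/\log n$ factor needed for the final $(\log n)^{-4}$ scaling.
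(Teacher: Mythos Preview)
Your outline departs substantially from the paper's argument, and the departure contains a genuine gap at exactly the point you flag as ``the main obstacle.''

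The paper does \emph{not} try to extract an extra $1/\log n$ from a return-probability/time-ordering argument. Instead it conditions on $S_n = z$, writes the second moment as
\[
\sum_{y_1,y_2,z} P^{x}\bigl(T^0_{y_1}\le n,\ T^0_{y_2}\le n,\ S_n=z\bigr)\,P^{z}\bigl(T^0_{y_1}\le n,\ T^0_{y_2}\le n\bigr),
\]
symmetrizes the second factor to $P^{z}(T^0_{y_1}<T^0_{y_2}\le n)$, and then splits the sum over $(y_1,y_2)$ into four geometric regions according to whether $d(y_1,z)$ and $d(y_1,y_2)$ exceed $(n/(\log n)^4)^{1/d}$. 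The crucial input is Lemma~\ref{hitting-self}, which gives $P^u(T_v\le n)\le C\log\log n/\log n$ whenever $d(u,v)\ge (n/(\log n)^4)^{1/d}$; two applications of this bound (once for $h_n(z,y_1)$, once for $h_n(y_1,y_2)$) in the dominant region, combined with $\sup_z E^z[R_n]=O(n/\log n)$, yield the $(\log\log n)^2/(\log n)^4$ factor directly.

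Your step~2, by contrast, needs the estimate (in your notation, Case~B with $i=1$)
\[
\sum_{l\le n} P^{x}\bigl(T_{y_2}<T_{y_1}=l\bigr)\,\min\!\Bigl(1,\ \tfrac{C\log(1+n/(n-l))}{\log n}\Bigr)\ \lesssim\ \frac{h_n(x,y_2)\,h_n(y_2,y_1)}{\log n}.
\]
The ``log-uniform'' heuristic does not establish this. For $l$ within $n/\log n$ of $n$ the return factor is of order~$1$, so you must show $P^{x}(T_{y_2}<T_{y_1},\ T_{y_1}\in(n-n/\log n,\,n])\lesssim h_n(x,y_2)h_n(y_2,y_1)/\log n$. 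Bounding $P^{y_2}(T_{y_1}\in(a,b])$ from above by something of order $(\log b - \log a)/\log n$ requires a \emph{lower} bound on $h_a(y_2,y_1)$, hence a heat-kernel lower bound---which Theorem~\ref{Lower} does \emph{not} assume. The crude bound $P^{y_2}(T_{y_1}=l')\le p_{l'}(y_2,y_1)\lesssim 1/l'$ gives only $O(1)$ on such intervals and loses the factor. In effect, to close this step you would have to reprove something equivalent to Lemma~\ref{hitting-self} (which the paper derives from the killed-Green-function estimates of \cite[Theorem~4.26]{B17}); once you have that lemma, the paper's geometric splitting is both shorter and avoids the time-integration entirely.

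Two smaller points: under Theorem~\ref{Lower} the walk dimension is $d$, not $\beta=2$, so your Green-function asymptotic should read $G_n(u,v)\asymp\log_{+}(n/d(u,v)^{d})$ and the relevant ball has radius $n^{1/d}$; and the summation formula you quote should be $\sum_{y_1,y_2} G_n(x,y_1)\,G_n(y_1,y_2)^k\lesssim n^2$ (no second factor $G_n(x,y_2)$), consistent with your pointwise bound which has $h_n(x,y_1)+h_n(x,y_2)$ rather than a product.
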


For the case that $G = \Z^2$, we have a better estimate (see \cite[Theorem 5.1]{L}).   
We conjecture that $(\log\log n)^2$ in the numerator of the right hand side of the above inequality could be removed.   
We use estimates for hitting time distributions on graphs (cf. \cite{B17}). 

\begin{Lem}\label{hitting-self}
There exists a positive constant $C$ such that 
for every $x, y$ such that $d(x,y) \ge \left(\dfrac{n}{(\log n)^4}\right)^{1/d}$, 
\[ P^x (T_y \le n) \le C \frac{\log\log n}{\log n}. \]
\end{Lem}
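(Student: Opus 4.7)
The plan is to bound $P^x(T_y \le n)$ by first converting it into a sum over the heat kernel along the pair $(x,y)$ via the strong Markov property, and then estimating that sum using the sub-Gaussian upper bound together with the Ahlfors $d$-regularity.

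For the first step, observe that by the strong Markov property at $T_y$,
\[
\sum_{k=0}^{2n} p_k(x,y) \;\ge\; E^x\!\left[\mathbf{1}_{\{T_y \le n\}} \sum_{m=0}^{2n - T_y} p_m(y,y)\right] \;\ge\; P^x(T_y \le n)\sum_{m=0}^{n} p_m(y,y),
\]
because $2n - T_y \ge n$ on $\{T_y \le n\}$. The uniform weak recurrence assumption \eqref{uc-ul} (together with a short summation argument handling the parity, analogous to the one in the proof of Lemma \ref{uc}) yields $\sum_{m=0}^{n} p_m(y,y) \ge c\log n$ uniformly in $y \in V(G)$ for $n$ large. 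Hence
\[
P^x(T_y \le n) \;\le\; \frac{1}{c\log n}\sum_{k=0}^{2n} p_k(x,y).
\]

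It remains to bound the sum on the right. Set $r := d(x,y)$. Substituting the lower Ahlfors bound $|B(x,k^{1/d})| \ge c_1 k$ into \eqref{sGHK-upper} gives
\[
p_k(x,y) \;\le\; \frac{C_0}{k}\exp\!\left(-c_4\!\left(\frac{r^d}{k}\right)^{1/(d-1)}\right).
\]
I would split the sum at $k = r^d$. On the range $r^d < k \le 2n$ the exponential factor is $\le 1$, so the contribution is at most $C_0 \log(2n/r^d)$; the hypothesis $r^d \ge n/(\log n)^4$ then gives $\log(2n/r^d) \le 4\log\log n + O(1)$. On the range $1 \le k \le r^d$, a dyadic decomposition $k \in (r^d/2^{j+1}, r^d/2^j]$ for $j \ge 0$ contributes $O(1)$ from the harmonic factor $\sum 1/k$ and is damped by the super-exponential factor $\exp(-c_4\, 2^{j/(d-1)})$; summing over $j \ge 0$ yields an $O(1)$ total. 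Altogether $\sum_{k=0}^{2n} p_k(x,y) = O(\log\log n)$, which combined with the reduction step gives the claim.

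The argument is conceptually standard; the only place that needs slight care is the dyadic estimate on the sub-diffusive range $k \le r^d$, where the super-exponential suppression easily dominates the logarithmic growth of $\sum 1/k$, and the degenerate case $r^d > 2n$ is handled identically (the dyadic bound then applies to the entire range $1 \le k \le 2n$ and yields an even stronger conclusion).
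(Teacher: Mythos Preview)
Your argument is correct and takes a somewhat different route from the paper's. The paper splits into two regimes according to the size of $d(x,y)$: for $d(x,y) \ge n^{1/d}(\log n)^2$ it bounds $P^x(T_y \le n)$ by $\sum_{i \le n} p_i(x,y)$ directly and uses the sub-Gaussian tail to make this super-polynomially small, while for $(n/(\log n)^4)^{1/d} \le d(x,y) < n^{1/d}(\log n)^2$ it decomposes via the exit time from $B(x,3n^{1/d}(\log n)^2)$ and invokes killed Green function bounds from \cite[Theorems 1.31 and 4.26(c)]{B17} together with \cite[Lemma 3.1]{KuN} to control $P^x(T_y \le T_{B(\cdot)^c})$. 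You avoid both the case split and the Green-function machinery: the strong Markov reduction (which is exactly inequality \eqref{2d-hit-upper} later in the paper) supplies the factor $(\log n)^{-1}$ via the on-diagonal lower bound from \eqref{uc-ul}, and then the single sum $\sum_{k \le 2n} p_k(x,y)$ is controlled using only \eqref{sGHK-upper} and the Ahlfors lower volume bound, with the natural threshold $k = r^d$ and a geometric dyadic tail below it. This is more self-contained and arguably cleaner; the paper's approach has the mild advantage of reading the logarithmic ratio directly off the Green function formula, at the cost of importing external estimates.
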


This lemma plays a role similar to  \cite[Theorem 3.6]{L}. 
We define  {\it  the Green function killed on a finite subset} $A$ of $V(G)$ by 
\[ G_A (x,y) = E^x \left[ \sum_{i=0}^{T_{A^c}} 1_{\{S_i = y\}} \right].  \]
The assumptions of Theorem \ref{Lower} are used in the proof. 
 
\begin{proof}
If $d(x,y) > n$, then, $P^x (T_y \le n) = 0$. 
Hereafter we assume $d(x,y) \le n$. 

Case 1. $n \ge d(x,y) \ge n^{1/d}(\log n)^2$. 
By \eqref{sGHK-upper} in the assumptions of Theorem \ref{Lower}, we have that there exist positive constants $c_1, c_2$ and $c_3$ such that for every $n \ge 1$ and $x, y \in V(G)$ satisfying that $d(x,y) \le n$, 
\[ P^x (T_y \le n) = \sum_{i = d(x,y)}^{n} P^x(T_y = i) \le  \sum_{i = d(x,y)}^{n} p_i (x,y)  \]
\[ \le \sum_{i = d(x,y)}^{n} \frac{c_1}{i}  \exp\left(-c_2 \frac{d(x,y)^{d/(d-1)}}{i^{1/(d-1)}}\right) \]
\[ \le  \exp\left(-c_2 \frac{(n^{1/d}(\log n)^2)^{d/(d-1)}}{n^{1/(d-1)}}\right) \sum_{i = d(x,y)}^{n} \frac{c_1}{i} \]
\[ \le c_3 \log \left(\frac{n}{n^{1/d}(\log n)^2}\right) \exp(-c_2 (\log n)^{2d/(d-1)}). \]
The upper bound is smaller than $O\left( \frac{\log\log n}{\log n} \right)$.

Case 2. $\left(\dfrac{n}{(\log n)^4}\right)^{1/d} \le d(x,y) < n^{1/d}(\log n)^2$. 
\[  P^x (T_y \le n) \le  P^x (T_{B(x, 3n^{1/d}(\log n)^2)^c} \le n) + P^x (T_y \le T_{B(x, 3n^{1/d}(\log n)^2)^c}).  \]

By \cite[Lemma 3.1]{KuN} (or \cite[Lemma 4.21(a)]{B17}), 
there exist positive constants $c_4$ and $c_5$ such that or every $n \ge 1$ and $x, y \in V(G)$ satisfying that $\left(\dfrac{n}{(\log n)^2}\right)^{1/d} \le d(x,y) < n^{1/d}(\log n)^2$, 
\[ P^x (T_{B(x, 3n^{1/d}(\log n)^2)^c} \le n) \le c_4 \exp\left(-c_5 (\log n)^{2d/(d-1)}\right). \]

By \cite[Theorem 1.31 and Theorem 4.26(c)]{B17}, 
there exists a positive constant $c_6$ such that for every $n \ge 1$ and $x, y \in V(G)$ satisfying that $\left(\dfrac{n}{(\log n)^2}\right)^{1/d} \le d(x,y) < n^{1/d}(\log n)^2$, 
\[  P^x (T_y \le T_{B(x, 3n^{1/d}(\log n)^2)^c}) = \frac{G_{B(x, 3n^{1/d}(\log n)^2)} (x,y)}{G_{B(x, 3n^{1/d}(\log n)^2)} (y,y)}  \]
\[ \le c_6 \frac{\log \left(3n^{1/d}(\log n)^2 / d(x,y)\right)}{\log (3n^{1/d}(\log n)^2)} \]
\[ \le c_6 \frac{\log \left(3n^{1/d}(\log n)^2 / \left(\dfrac{n}{(\log n)^2}\right)^{1/d} \right)}{\log (3n^{1/d}(\log n)^2)} = O\left(\frac{\log\log n}{\log n}\right). \]
Thus we have Lemma \ref{hitting-self}. 
\end{proof}

\begin{proof}[Proof of Proposition \ref{self-intersect-expectation}]
In this proof we let 
\[ T^{0}_y := \inf\{n \ge 0 : S_n = y\}, \ y \in V(G). \]
By the Markov property, we have that 
 \[ E^{x}\left[ \left| \{S_0, \dots, S_n\} \cap  \{S_n, \dots, S_{2n}\}    \right|^2 \right] = E^{x}\left[ \left(\sum_{y}  1_{\{  y \in \{S_0, \dots, S_n\}\}}1_{y \in  \{S_n, \dots, S_{2n}\}\}} \right)^2 \right]\]
 \[ = \sum_{y_1, y_2}  E^{x}\left[ 1_{\{  y_1,  y_2 \in \{S_0, \dots, S_n\}\}}  1_{ y_1,  y_2 \in  \{S_n, \dots, S_{2n}\}\}} \right]    \]
\begin{equation}\label{self-intersect-sum}
=  \sum_{y_1, y_2, z}  P^{x}\left(  T^{0}_{y_1} \le n,  T^{0}_{y_2} \le n, S_n = z\right)  P^{z} \left(   T^{0}_{y_1} \le n,  T^{0}_{y_2} \le n  \right). 
\end{equation}

We first consider that case that $y_1 = y_2$ in \eqref{self-intersect-sum}. 
By  \eqref{Ahlfors} and Lemma \ref{hitting-self},   
\[ \sum_{y, z}  P^{x}\left(  T^{0}_{y} \le n,  S_n = z\right)  P^{z} \left(   T^{0}_{y} \le n  \right) \le \sum_{y,z; d(y,z) \le \left(\frac{n}{(\log n)^4}\right)^{1/d}} P^{x}\left( T^{0}_{y} \le n,  S_n = z\right)  \]
\[ +  \sum_{y,z; d(y,z) > \left(\frac{n}{(\log n)^4}\right)^{1/d}} P^{x}\left(  T^{0}_{y} \le n,  S_n = z\right) \cdot \frac{C \log\log n}{\log n}.  \]
\[ \le  \sum_{z} \sum_{y; d(y,z) \le \left(\frac{n}{(\log n)^4}\right)^{1/d}} P^{x}\left(S_n = z\right)  +  \sum_{y} P^{x}\left(  T^{0}_{y} \le n\right) \cdot \frac{C \log\log n}{\log n} \]
\begin{equation}\label{seif-inter-moment-1-pre} 
\le C \left(\frac{n}{(\log n)^2} + E^x [R_n] \frac{\log\log n}{\log n} \right). 
\end{equation} 

By Theorem \ref{WLLN-UC} (iv), 
\begin{equation}\label{exp-upper}
\sup_{x \in V(G)} E^x[R_n] = O\left(\frac{n}{\log n}\right). 
\end{equation}
By substituting this into \eqref{seif-inter-moment-1-pre}, 
\begin{equation}\label{seif-inter-moment-1} 
\sum_{y, z}  P^{x}\left(  T^{0}_{y} \le n,  S_n = z\right)  P^{z} \left(   T^{0}_{y} \le n  \right) = O\left(\frac{n \log\log n}{(\log n)^2}\right). 
\end{equation}  

By symmetry, we have that for each fixed $z$, 
\[  \sum_{y_1, y_2; y_1 \ne y_2}  P^{x}\left(  T^{0}_{y_1} \le n,  T^{0}_{y_2} \le n, S_n = z\right)  P^{z} \left(   T^{0}_{y_1} \le n,  T^{0}_{y_2} \le n  \right) \]
\[ = 2 \sum_{y_1, y_2; y_1 \ne y_2}  P^{x}\left(  T^{0}_{y_1} \le n,  T^{0}_{y_2} \le n, S_n = z\right)  P^{z} \left(   T^{0}_{y_1} < T^{0}_{y_2} \le n  \right). \]
Hence, by taking the sum with respect to $z$, 

\[ \sum_{y_1, y_2, z; y_1 \ne y_2}  P^{x}\left(  T^{0}_{y_1} \le n,  T^{0}_{y_2} \le n, S_n = z\right)  P^{z} \left(   T^{0}_{y_1} \le n,  T^{0}_{y_2} \le n  \right) \]
\begin{equation}\label{symm}  
= 2 \sum_{y_1, y_2, z; y_1 \ne y_2}  P^{x}\left(  T^{0}_{y_1} \le n,  T^{0}_{y_2} \le n, S_n = z\right)  P^{z} \left(   T^{0}_{y_1} < T^{0}_{y_2} \le n  \right). 
\end{equation}

Consider the following four sets.  
For each fixed $z$, let 
\[ A_1 (z) := \left\{(y_1, y_2) : d(y_1,z) \le \left(\frac{n}{(\log n)^4}\right)^{1/d}, d(y_1,y_2) \le \left(\frac{n}{(\log n)^4}\right)^{1/d} \right\}, \]
\[ A_2 (z)  := \left\{(y_1, y_2) : d(y_1,z) \le \left(\frac{n}{(\log n)^4}\right)^{1/d}, d(y_1,y_2) > \left(\frac{n}{(\log n)^4}\right)^{1/d} \right\}, \]
\[ A_3 (z)  := \left\{(y_1, y_2) : d(y_1,z) > \left(\frac{n}{(\log n)^4}\right)^{1/d}, d(y_1,y_2) \le \left(\frac{n}{(\log n)^4}\right)^{1/d} \right\}, \]
and 
\[ A_4 (z)  := \left\{(y_1, y_2) : d(y_1,z) > \left(\frac{n}{(\log n)^4}\right)^{1/d}, d(y_1,y_2) > \left(\frac{n}{(\log n)^4}\right)^{1/d} \right\}. \]

(1) By  \eqref{Ahlfors}, we have that 
\[ \sum_z \sum_{(y_1, y_2) \in A_1(z) ; y_1 \ne y_2}  P^{x}\left(  T^{0}_{y_1} \le n,  T^{0}_{y_2} \le n, S_n = z\right)  P^{z} \left(   T^{0}_{y_1} < T^{0}_{y_2} \le n  \right)\]
\[ \le \sum_{z} \sum_{y_1, y_2; d(y_1,z) \le \left(\frac{n}{(\log n)^4}\right)^{1/d},  d(z,y_2) \le 2\left(\frac{n}{(\log n)^4}\right)^{1/d}}  P^{x}\left(S_n = z\right) 
= O\left(\frac{n^2}{(\log n)^8}\right). \]

(2) By the Markov property, Lemma \ref{hitting-self} and \eqref{exp-upper}, 
\[ \sum_z \sum_{(y_1, y_2) \in A_2(z); y_1 \ne y_2}  P^{x}\left(  T^{0}_{y_1} \le n,  T^{0}_{y_2} \le n, S_n = z\right)  P^{z} \left(   T^{0}_{y_1} < T^{0}_{y_2} \le n  \right)\]
\[ \le \sum_z \sum_{(y_1, y_2) \in A_2(z); y_1 \ne y_2}  P^{x}\left(  T^{0}_{y_1} \le n,  T^{0}_{y_2} \le n, S_n = z\right)  P^{z}(   T^{0}_{y_1} \le n)P^{y_1}( T^{0}_{y_2} \le n ) \] 
\[ \le \sum_z \sum_{(y_1, y_2) \in A_2 (z); y_1 \ne y_2}  P^{x}\left(  T^{0}_{y_1} \le n,  T^{0}_{y_2} \le n, S_n = z\right)  P^{z}(   T^{0}_{y_1} \le n) \cdot \frac{C \log\log n}{\log n}. \] 
\[ \le  \frac{C \log\log n}{\log n} \sum_z   \sum_{y_2} \left(P^{x}\left( T^{0}_{y_2} \le n, S_n = z\right)  \sum_{y_1; d(y_1,z) \le \left(\frac{n}{(\log n)^4}\right)^{1/d}} P^{z}(   T^{0}_{y_1} \le n)\right) \]
\[ \le  \frac{C n\log\log n}{(\log n)^5} \sum_z   \sum_{y_2}    P^{x}\left( T^{0}_{y_2} \le n, S_n = z\right)  \]
\[ = \frac{C n\log\log n}{(\log n)^5} \sum_{y_2}    P^{x}\left( T^{0}_{y_2} \le n\right) = O\left(\frac{n^2 \log\log n}{(\log n)^6} \right). \]

(3) By the Markov property, and \eqref{seif-inter-moment-1}, 
\[ \sum_z \sum_{(y_1, y_2) \in A_3 (z); y_1 \ne y_2}  P^{x}\left(  T^{0}_{y_1} \le n,  T^{0}_{y_2} \le n, S_n = z\right)  P^{z} \left(   T^{0}_{y_1} < T^{0}_{y_2} \le n  \right)\]
\[ \le \sum_z \sum_{(y_1, y_2) \in A_3 (z); y_1 \ne y_2}  P^{x}\left(  T^{0}_{y_1} \le n,  T^{0}_{y_2} \le n, S_n = z\right)  P^{z}(   T^{0}_{y_1} \le n)P^{y_1}( T^{0}_{y_2} \le n ) \] 
\[ \le \sum_z  \sum_{y_1; d(y_1,z) > \left(\frac{n}{(\log n)^4}\right)^{1/d}}  \left(P^{x}\left(  T^{0}_{y_1} \le n, S_n = z\right)  P^{z}(   T^{0}_{y_1} \le n) \sum_{y_2; d(y_1,y_2) \le \left(\frac{n}{(\log n)^4}\right)^{1/d}} P^{y_1}( T^{0}_{y_2} \le n ) \right)  \] 
\[ \le \frac{Cn}{(\log n)^4} \sum_z \sum_{y_1; d(y_1,z) > \left(\frac{n}{(\log n)^4}\right)^{1/d}} P^{x}\left(  T^{0}_{y_1} \le n, S_n = z\right)  P^{z}(   T^{0}_{y_1} \le n) \]
\[ \le \frac{Cn \log\log n}{(\log n)^5} \sum_z \sum_{y_1} P^{x}\left(  T^{0}_{y_1} \le n, S_n = z\right)  \]
\[ \le \frac{Cn \log\log n}{(\log n)^5} \sum_{y_1} P^{x}\left(  T^{0}_{y_1} \le n\right)  = O\left(\frac{n^2 \log\log n}{(\log n)^6} \right). \]

(4) By the Markov property, Lemma \ref{hitting-self} and \eqref{exp-upper}, 
\[ \sum_z \sum_{(y_1, y_2) \in A_4 (z); y_1 \ne y_2}  P^{x}\left(  T^{0}_{y_1} \le n,  T^{0}_{y_2} \le n, S_n = z\right)  P^{z} \left(   T^{0}_{y_1} < T^{0}_{y_2} \le n  \right)\]
\[ \le \sum_z \sum_{(y_1, y_2) \in A_4(z); y_1 \ne y_2}  P^{x}\left(  T^{0}_{y_1} \le n,  T^{0}_{y_2} \le n, S_n = z\right)  P^{z}(   T^{0}_{y_1} \le n)P^{y_1}( T^{0}_{y_2} \le n ) \] 
\[ \le \frac{C(\log\log n)^2}{(\log n)^2} \sum_z \sum_{y_1 \ne y_2}  P^{x}\left(  T^{0}_{y_1} \le n,  T^{0}_{y_2} \le n, S_n = z \right) \] 
\[ \le \frac{C(\log\log n)^2}{(\log n)^2} \sum_{y_1 \ne y_2}  P^{x}\left(  T^{0}_{y_1} \le n,  T^{0}_{y_2} \le n \right) \] 
\[ \le \frac{2C(\log\log n)^2}{(\log n)^2} \sum_{y_1}  P^{x}(  T^{0}_{y_1} \le n) \left(\sum_{y_2} P^{y_1} ( T^{0}_{y_2} \le n )\right) \] 
\[ \le \frac{2C(\log\log n)^2}{(\log n)^2} \left( \sup_z E^z [R_n] \right)^2 \]
\[ = O\left(\frac{n^2 (\log\log n)^2}{(\log n)^4}\right), \]
where in the fourth inequality we have used that  
\[ \sum_{y_1 \ne y_2}  P^{x}\left(  T^{0}_{y_1} \le n,  T^{0}_{y_2} \le n \right) \le 2 \sum_{y_1, y_2}  P^{x}\left(  T^{0}_{y_1} < T^{0}_{y_2} \le n \right) \le 2  \sum_{y_1}  P^{x}(  T^{0}_{y_1} \le n) \left(\sum_{y_2} P^{y_1} ( T^{0}_{y_2} \le n )\right). \]

By (1) - (4), we have that 
\[ \sum_{y_1, y_2, z; y_1 \ne y_2}  P^{x}\left(  T^{0}_{y_1} \le n,  T^{0}_{y_2} \le n, S_n = z\right)  P^{z} \left(   T^{0}_{y_1} < T^{0}_{y_2} \le n  \right) = O\left(\frac{n^2 (\log\log n)^2}{(\log n)^4}\right). \]

This, \eqref{symm} and \eqref{seif-inter-moment-1} imply the assertion. 
\end{proof} 

\begin{Rem}
(i) We conjecture that for every natural number $p$, there exists a positive constant $C$ such that for every $x \in V(G)$ and $n \ge 3$, 
\[  E^{x}\left[ \left| \{S_0, \dots, S_n\} \cap  \{S_n, \dots, S_{2n}\}   \right|^p \right] \le C \frac{n^p (\log\log n)^p}{(\log n)^{2p}}. \]
(ii) In \cite[Lemma 3.1]{KuN}, the Ahlfors regularity is assumed. 
However,  we only need the upper bound for the polynomial volume growth. 
See the proof of \cite[Lemma 4.21(a)]{B17}. \\
(iii) In \cite[Theorem 4.26(c)]{B17}, the sub-Gaussian heat kernel upper and lower bounds are both assumed. 
However, we only need the upper bound for $G_{B(x, 3n^{1/d}(\log n)^2)} (x,y)$ and the lower bound for $G_{B(x, 3n^{1/d}(\log n)^2)} (x,x)$. 
In order to establish these two bounds, we need the sub-Gaussian heat kernel upper bound and the lower bound for the on-diagonal  heat kernel lower bound, respectively. \\
(iv) The inductive technique in the proof of \cite[Lemme 6.2]{L} does not work well in our case at least in direct manners. 
Our framework cannot induce the numerics ``$2^{1/2}$" in the proof of \cite[Lemme 6.2]{L}. 
Instead of using variance estimates, we develop a new proof of the lower bound of the strong law of large numbers in the following section.  
\end{Rem} 


\section{Proof of Theorem \ref{Lower}}

We will show that for every $\epsilon > 0$, 
\begin{equation}\label{2nd-full}
\sup_x P^x \left(\frac{R_n}{n/\log n} \le \frac{r_{\inf}}{1+\epsilon}\right) = O\left( \left( \frac{1}{\log n} \right)^{3/2} \right)
\end{equation}
Once this is established, then, we can derive Theorem \ref{Lower} by adopting the same interpolation argument as in the proof of Theorem \ref{WLLN-UC} (iii). 

We will show \eqref{2nd-full} by a form of induction with respect to the coefficient of $r_{\inf}$. 
More specifically, we will show that there exists a strictly increasing {\it finite} sequence $(a_0, \dots, a_k)$ such that \\
(i) $a_0 = \epsilon$\\
(ii) $1+ a_k > 2$\\
(iii) For every $0 \le i \le k$, 
\begin{equation}\label{2nd-induction}
P^x \left(\frac{R_n}{n/\log n} \le \frac{r_{\inf}}{1 + a_i}\right) =O\left(  \left( \frac{1}{\log n} \right)^{3/2}\right)
\end{equation}

If $i = k$, \eqref{2nd-induction} is established by \eqref{2nd-2} in the proof of Theorem \ref{WLLN-UC} (iii).
We will show \eqref{2nd-induction} by induction in $i$ in reversed order and finally obtain \eqref{2nd-full}.

Let $\epsilon_0 > 0$.
Let $\epsilon_1$ and $\epsilon_2$ be two positive real number smaller than $\epsilon_0$ which will be specified later.  
For each $n \ge 4$, we consider the following four events. 
\[ A := \left\{\left|\{S_0, \dots, S_{\left\lfloor n/2 \right\rfloor}\} \cap \{S_{\left\lfloor n/2 \right\rfloor}, \dots, S_n\} \right| > \dfrac{\epsilon_1 n}{\log n} \right\}. \]
\[ B := \left\{\left|\{S_0, \dots, S_{\left\lfloor n/2 \right\rfloor}\} \right| \le \dfrac{\left\lfloor n/2 \right\rfloor}{\log n} \dfrac{r_{\inf}}{1+\epsilon_2}, \left| \{S_{\left\lfloor n/2 \right\rfloor}, \dots, S_n\} \right| \le \dfrac{\left\lfloor n/2 \right\rfloor}{\log n} \dfrac{r_{\inf}}{1+\epsilon_2}\right\}. \] 
\[ C_1 := \left\{ \left| \{S_{\left\lfloor n/2 \right\rfloor}, \dots, S_n\} \right| > \dfrac{\left\lfloor n/2 \right\rfloor}{\log n} \dfrac{r_{\inf}}{1+\epsilon_2}\right\}. \] 
\[ C_2 := \left\{ \left|\{S_0, \dots, S_{\left\lfloor n/2 \right\rfloor}\} \right| > \dfrac{\left\lfloor n/2 \right\rfloor}{\log n} \dfrac{r_{\inf}}{1+\epsilon_2} \right\}. \] 

Then, 
\[ \left\{ \frac{R_n}{n/\log n} \le \frac{r_{\inf}}{1+\epsilon_0} \right\} \]
\[\subset A \cup B \cup \left(C_1 \cap A^c \cap \left\{ \frac{R_n}{n/\log n} \le \frac{r_{\inf}}{1+\epsilon_0} \right\} \right) \cup \left(C_2 \cap A^c \cap \left\{ \frac{R_n}{n/\log n} \le \frac{r_{\inf}}{1+\epsilon_0} \right\} \right). \]

For ease of notation, we temporarily assume $n$ is an even number. 

We estimate $P^x (A)$. 
By using the assumption that $G$ has the bounded degree and Proposition \ref{self-intersect-main}, 
it holds that there exists a positive constant $C$ such that for every $x$, 
\[ P^x (A) = P^{x}\left( \left| \{S_0, \dots, S_{\left\lfloor n/2 \right\rfloor}\} \cap  \{S_{\left\lfloor n/2 \right\rfloor}, \dots, S_n\}  \right| \ge \frac{\epsilon_1 n}{\log n}\right) \le \frac{C  \max_{x \in V(G)} \deg(x) }{\epsilon_1^4 (\log n)^{3/2}}. \]

We estimate $P^x (B)$ in the same manner as in the proof of Theorem \ref{WLLN-UC} (iii). 
By the Markov property we have that 
\[ P^x (B) \le  \sup_y P^y \left( R_{\left\lfloor n/2 \right\rfloor } \le \frac{r_{\inf}}{1+\epsilon_2 / 2} \frac{\left\lfloor n/2 \right\rfloor }{\log \left\lfloor n/2 \right\rfloor } \right)^2 =  O\left(\left( \frac{\log\log n}{\log n} \right)^{2}\right) \]

We estimate $P^x \left(C_i \cap A^c \cap \left\{ \frac{R_n}{n/\log n} \le \frac{r_{\inf}}{1+\epsilon_0} \right\}\right), i = 1,2$. 
If $\epsilon_1$ and $\epsilon_2$ are sufficiently small, 
then, 
\[ C_1  \cap A^c \cap \left\{ \frac{R_n}{n/\log n} \le \frac{r_{\inf}}{1+\epsilon_0} \right\} \subset \left\{\left|\{S_0, \dots, S_{\left\lfloor n/2 \right\rfloor}\} \right| \le \dfrac{n/2}{\log n} \dfrac{r_{\inf}}{1+ 3\epsilon_0/2}\right\}, \]
and, 
\[ C_2  \cap A^c \cap \left\{ \frac{R_n}{n/\log n} \le \frac{r_{\inf}}{1+\epsilon_0} \right\} \subset  \left\{\left|\{S_{\left\lfloor n/2 \right\rfloor}, \dots, S_{n}\} \right| \le \dfrac{n/2}{\log n} \dfrac{r_{\inf}}{1+ 3\epsilon_0/2}\right\}.  \]

Hence we have that for $i = 1,2$ and for sufficiently large $n$
\[ P^x \left(C_i \cap A^c \cap \left\{ \frac{R_n}{n/\log n} \le \frac{r_{\inf}}{1+\epsilon_0} \right\}\right) \le \sup_y P^y \left( R_{\left\lfloor n/2 \right\rfloor} \le \frac{r_{\inf}}{1+ 4\epsilon_0/3} \frac{\left\lfloor n/2 \right\rfloor}{\log \left\lfloor n/2 \right\rfloor}  \right). \]

Thus we have that 
\[ \sup_x P^x \left(R_n \le \frac{r_{\inf}}{1+\epsilon_0}\frac{n}{\log n}\right) \le 2 \sup_y P^y \left(R_{n/2}  \le \frac{r_{\inf}}{1+ 4\epsilon_0/3} \frac{\left\lfloor n/2 \right\rfloor}{\log \left\lfloor n/2 \right\rfloor}\right)  + O\left(\left( \frac{1}{\log n} \right)^{3/2}\right). \]

This completes the inductive step in reversed order. 
Indeed, we have now that  if \eqref{2nd-induction} also holds for $a_{i} = 4\epsilon_0 /3$, then \eqref{2nd-induction} holds for $a_{i-1} = \epsilon_0$.  

Let $\epsilon > 0$. 
Let $k = k(\epsilon)$ be a natural number such that $(4/3)^k \epsilon > 1$. 
Let $a_0, \dots, a_k$ be a sequence such that $a_0 = \epsilon$ and $a_i = (4/3)^i \epsilon$ for every $i$. 
Then by repeating the above argument, 
we see that for every sufficiently large $n$ which is a multiple of $2^k$, 
\[ \sup_x P^x \left(\frac{R_n}{n/\log n} \le \frac{r_{\inf}}{1+\epsilon}\right) \le 2^k \sup_x P^x \left(\frac{R_n}{n/\log n} \le \frac{r_{\inf}}{1+a_k}\right) + O\left(\left( \frac{1}{\log n} \right)^{3/2}\right). \]

Hence \eqref{2nd-full} holds for sufficiently large $n$ which is a multiple of $2^k$. 
By the interpolation argument as in the proof of Theorem \ref{WLLN-UC} (iii), 
we see that \eqref{2nd-full} holds for sufficiently large $n$.


\section{Proof of Theorem \ref{SLLN}}

\begin{proof}[Proof of Theorem \ref{SLLN}]
By Theorem \ref{WLLN-UC} (i), 
\[ \limsup_{n \to \infty} \frac{R_n}{n/\log n} \in [0, r_{\sup}], \ P^x\textup{-a.s.}\]
The random variables $\displaystyle \limsup_{n \to \infty} \frac{R_n}{n/\log n}$ and $\displaystyle \liminf_{n \to \infty} \frac{R_n}{n/\log n}$ are both in the tail $\sigma$-algebra\\ 
$$\bigcap_{n \ge 1} \sigma\left(\{S_k : k \ge n\}\right).$$ 
Since  $G$ is Ahlfors $d$-regular and satisfies the sub-Gaussian heat kernel estimates with walk dimension $d$ for some $d > 1$,  
the zero-one law by Barlow-Bass \cite[Theorem 8.4]{BB99} holds. 
Hence, there exist non-random constants $c_{\inf}, c_{\sup} \in [0, r_{\sup}]$ such that 
\[ \liminf_{n \to \infty} \frac{R_n}{n/\log n} = c_{\inf}, \ \textup{ $P^x$-a.s.}\] 
\[ \limsup_{n \to \infty} \frac{R_n}{n / \log n} = c_{\sup}, \ \textup{ $P^x$-a.s.}  \]
We remark that neither $c_{\inf}$ or $c_{\sup}$ depends on $x$. 
\eqref{c-inf-Fatou-new} and \eqref{sandwich-new} now follow from Fatou's lemma and Theorem \ref{WLLN-UC}. 
\end{proof} 

\begin{Rem}
We are not sure whether the following holds or not:\\ 
$c_{\sup} = r_{\sup}$, $c_{\inf} = r_{\inf}$,  
\[ c_{\sup} = \limsup_{n \to \infty} \frac{E^x[R_n]}{n / \log n}, \]
\[ c_{\inf} = \liminf_{n \to \infty} \frac{E^x[R_n]}{n / \log n}. \]
\end{Rem} 


\section{Proofs of Theorems \ref{fm-2d} and \ref{fluc-2d}} 

We adopt a strategy similar to the one used to show the corresponding result for the Wiener sausage in the metric measure space with spectral dimension two (\cite[Theorem 1.4 (ii)]{O-ws}).

We first remark that by using the Markov property it is easy to see that 
\begin{Lem}\label{ws}
For every $x \ne y \in G$ and every $n, k \ge 1$,  we have that 
\begin{equation}\label{2d-range-hit} 
E_{G}^x [R_n] = 1+\sum_{y \in G, y \ne x} P^x (T_y \le n), 
\end{equation} 
\begin{equation}\label{2d-hit-upper} 
P^x (T_y \le n) \le \frac{\sum_{i \le n+k} P^x (S_i = y)}{\sum_{i \le k} P^y (S_i = y)},  
\end{equation} 
and, 
\begin{equation}\label{2d-hit-lower} 
P^x (T_y \le n) \ge \frac{\sum_{i \le n} P^x (S_i = y)}{\sum_{i \le n} P^y (S_i = y)}. 
\end{equation} 
\end{Lem}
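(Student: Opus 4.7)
The plan is to prove all three identities by elementary first-passage decomposition arguments, so no step should be genuinely hard.

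First I would establish \eqref{2d-range-hit} by writing the range as a sum of indicators:
\[ R_n = \sum_{y \in V(G)} 1_{\{y \in \{S_0, \dots, S_n\}\}}. \]
Under $P^x$, the $y=x$ term is deterministically $1$ because $S_0 = x$. For $y \ne x$, membership in the set $\{S_0, \dots, S_n\}$ is equivalent to $T_y \le n$ since the walk starts away from $y$ and $T_y$ is defined as the first $k \ge 1$ with $S_k = y$. Taking expectations gives \eqref{2d-range-hit}.

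Next I would handle \eqref{2d-hit-upper} and \eqref{2d-hit-lower} simultaneously. The key input is the first-passage decomposition: for any $y$ and any $i \ge 1$,
\[ P^x(S_i = y) = \sum_{j=1}^{i} P^x(T_y = j)\, P^y(S_{i-j} = y), \]
obtained by conditioning on the first hit of $y$ and applying the strong Markov property at time $T_y$. For \eqref{2d-hit-lower}, I sum this identity over $i \le n$, interchange the order of summation, and bound $\sum_{l \le n-j} P^y(S_l = y) \le \sum_{l \le n} P^y(S_l = y)$ to pull a common factor out of the $j$-sum; what remains of the $j$-sum is $P^x(T_y \le n)$, giving the stated inequality after rearrangement. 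For \eqref{2d-hit-upper}, I sum the decomposition over $i \le n+k$ instead, restrict the resulting $j$-sum to $j \le n$ (which only decreases the sum), and use the bound $\sum_{l \le n+k-j} P^y(S_l = y) \ge \sum_{l \le k} P^y(S_l = y)$, valid because $n + k - j \ge k$ when $j \le n$. Again the $j$-sum becomes $P^x(T_y \le n)$ and the inequality follows after dividing.

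There is no serious obstacle; the only minor care is to keep the roles of $n$ and $k$ straight in \eqref{2d-hit-upper} and to note that $P^y(S_0 = y) = 1$ so the denominators are strictly positive.
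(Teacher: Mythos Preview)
Your proof is correct and is exactly the standard Markov-property (first-passage decomposition) argument that the paper alludes to when it says ``by using the Markov property it is easy to see that'' without supplying details. Nothing to add.
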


Let $G$ be a finite modification of $\Z^2$.  
Let $o \in G$. 
If we let $N_0 > \max_{x \in D} d_G (o, x)$, then, $D \subset B_G (o, N_0)$. 

\begin{proof}[Proof of Theorem \ref{fm-2d}] 
Let $\delta > 0$ and $m \ge 1$. 
Let 
\[ k_{n} := \delta \frac{n^{1/2}}{\log n}, \textup{  \ and  \ } l_{m, n} := \frac{n^{m/(m+1)}}{(\log n)^3}. \] 
We take the integer parts of the quantities of the right hand sides if it is needed.

If $y \in G \setminus B_{G}(o, 2k_{n})$, then, by the definition of the finite modification, 
$B_G (o, k_n)$ is graph-isomorphic to $B_{\Z^2}(0, k_n)$ for large $k_n$. 
Hence, 
\[ P_G^y \left(S_i = y, T_{B_G (y, k_n)^c} > i \right) = P^{0}_{\Z^2} (S_i = 0, T_{B_{\Z^2} (0, k_n)^c} > i ), \]
and, 
\[  P_G^y \left(T_{B_G (y, k_n)^c} \le i \right) = P^{0}_{\Z^2} (T_{B_{\Z^2} (0, k_n)^c} \le i ). \]
By using them, 
\begin{align*} 
\left|\sum_{i \le l_{m, n}} P_{G}^y (S_i = y) - P_{\Z^2}^0 (S_i = 0)\right|  &\le \sum_{i \le l_{m, n}} P_{\Z^2}^0 \left(T_{B(0,k_n)^c} \le i \right) \\
&= \sum_{i \le l_{m, n}} P_{\Z^2}^0 \left(\max_{j \le i} d_{\Z^2}(0, S_j) \ge k_n\right). 
\end{align*} 

By the Burkholder-Davis-Gundy inequality for degree $m \ge 2$, 
\[ P_{\Z^2}^0 \left(\max_{j \le i} d_{\Z^2}(0, S_j) \ge k_n\right) \le \frac{1}{k_n^{2m}}  E_{\Z^2}^0 \left[\max_{j \le i} d_{\Z^2}(0, S_j)^{2m} \right] \le C_m \frac{i^{m}}{k_n^{2m}}.  \]

Therefore, 
\begin{equation}\label{2d-fm-core}
\left|\sum_{i \le l_{m, n}} P_{G}^y (S_i = y) - P_{\Z^2}^0 (S_i = 0)\right| \le \frac{C_m}{(\log n)^{m+3}}.  
\end{equation}    

First we give the upper estimate. 
By \eqref{2d-range-hit}, \eqref{2d-hit-upper} and \eqref{2d-fm-core}, 
we have  that  
\begin{align*} 
\sum_{y \in G \setminus B_{G}(o, 2k_{n})} P_{G}^o (T_y \le n) 
&\le \sum_{y \in G \setminus B_{G}(o, 2k_{n})} \frac{\sum_{i \le n + l_{m, n}} P_{G}^x (S_i = y)}{\sum_{i \le l_{m, n}} P_{G}^y (S_i = y)} \\
&\le (n + l_{m, n})\left(\frac{1}{\sum_{i \le l_{m, n}}  P_{\Z^2}^0 (S_i = 0)}  + \frac{C_m}{(\log n)^{m+3}}\right). 
\end{align*}  

Hence, 
\[ \limsup_{n \to \infty} \frac{\log n}{n} \sum_{y \in G \setminus B_{G}(o, 2k_{n})} P_{G}^o (T_y \le n) \le \pi.  \]

On the other hand, by recalling  the definition of $k_n$ and the rate of the volume growth of $G$,   
\[ \limsup_{n \to \infty} \frac{\log n}{n} \sum_{y \in B_{G}(o, 2k_{n})} P_{G}^o (T_y \le n) \le C \delta^2. \]
Since $\delta > 0$ is taken arbitrarily, 
we have  that 
\[ \limsup_{n \to \infty} \frac{\log n}{n} E^{o}_{G} [R_n] \le \pi. \] 

Second we give the lower estimate. 
It holds that $G$ is roughly isometric to $\Z^2$. 
Since the isoperimetric inequality of order 2, which is denoted by $(IS_{2})$ here, is stable with respect to rough isometry, 
$(IS_{2})$, and hence the Nash inequality of order $2$, holds also for $G$.  (See \cite[Theorem 4.7 and Corollary 14.5]{W})

By using \eqref{2d-fm-core} and the Nash inequality,  
\[ \sup_{y \in G \setminus B_{G}(o, 2k_{n})} \left|\sum_{i \le n} P_{G}^y (S_i = y) - P_{\Z^2}^0 (S_i = 0) \right| 
\le \frac{C_m}{(\log n)^{m+3}} +  \frac{C \log n}{m+1}.  \]
Hence, by letting $n \to \infty$ first, and then $m \to \infty$,  
\[ \limsup_{n \to \infty} \frac{1}{\log n} \sup_{y \in G \setminus B_{G}(o, 2k_{n})} 
\left| \sum_{i \le n} P_{G}^y (S_i = y) - P_{\Z^2}^0 (S_i = 0) \right| = 0.  \]
By this, \eqref{2d-range-hit} and \eqref{2d-hit-lower}, 
we have  that  
\begin{align*} 
\sum_{y \in G \setminus B_{G}(o, 2k_{n})} P_{G}^o (T_y \le n) 
&\ge \sum_{y \in G \setminus B_{G}(o, 2k_{n})}  \frac{\sum_{i \le n} P_{G}^x (S_i = y)}{\sum_{i \le n} P_{G}^y (S_i = y)} \\
&\ge \frac{\sum_{y \in G \setminus B_{G}(o, 2k_{n})} \sum_{i \le n} P_{G}^x (S_i = y)}{\sum_{i \le n} P_{G}^0 (S_i = 0) + o(1) \log n}. 
\end{align*}
Hence, by recalling the definition of $k_n$, 
\[ \liminf_{n \to \infty} \frac{\log n}{n} \sum_{y \in G \setminus B_{G}(o, 2k_{n})} P_{G}^o (T_y \le n) \ge \pi. \] 
\end{proof} 


Now we proceed to the proof of Theorem \ref{fluc-2d}. 

We first remark that by Woess \cite[Theorems 14.12 and 14.19]{W},   
\begin{Prop}\label{Nash-2nd}
Assume $G$ satisfies that $|B(x,n)| \le C n^2$ and $p_n (x,x) \le C n^{-1}$. 
Assume $G^{\prime}$ is roughly isometric to $G$. 
Then, 
$G^{\prime}$ also satisfies that $|B(x,n)| \le C n^2$ and $p_n (x,x) \le C n^{-1}$.  
\end{Prop}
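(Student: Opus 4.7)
The plan is to transfer each of the two properties from $G$ to $G^{\prime}$ using the standard rough-isometry stability framework of Woess \cite{W}. The two conclusions are essentially independent: polynomial volume growth is stable almost directly from the definition of rough isometry, while the on-diagonal heat kernel decay is stable because, under bounded geometry, it is characterized by an order-$2$ Nash (equivalently isoperimetric) inequality that itself is preserved under rough isometry.

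For the volume bound, I would fix a rough isometry $\phi : G \to G^{\prime}$ with parameters $(a,b)$ and a quasi-inverse $\psi : G^{\prime} \to G$. Since $\psi$ distorts distances by at most a factor $a$ plus an additive constant, it maps $B_{G^{\prime}}(x^{\prime}, n)$ into a ball in $G$ of radius $an + O(1)$; bounded degree of $G^{\prime}$ controls the cardinality of the $b$-fibres of $\psi$, yielding
\[
|B_{G^{\prime}}(x^{\prime}, n)| \le C \, |B_G(\psi(x^{\prime}), an + O(1))| \le C^{\prime} n^2.
\]
This is essentially the content of Woess \cite[Theorem 14.19]{W} in the setting at hand.

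For the heat kernel decay, I would use the chain of equivalences
\[
p_n(x,x) \le C/n \text{ on } G \ \Longleftrightarrow \ (\mathrm{IS}_2) \text{ on } G \ \Longleftrightarrow \ (\mathrm{IS}_2) \text{ on } G^{\prime} \ \Longleftrightarrow \ p_n(x,x) \le C/n \text{ on } G^{\prime},
\]
where $(\mathrm{IS}_2)$ denotes the order-$2$ isoperimetric inequality, equivalent under bounded degree to the Nash inequality $\|f\|_2^{4} \le C_N \, \mathcal{E}(f,f) \, \|f\|_1^{2}$ (with $\mathcal{E}$ the Dirichlet form of the simple random walk). The outer equivalences are Woess \cite[Theorem 14.12]{W}; the middle one, namely the stability of $(\mathrm{IS}_2)$ under rough isometry of bounded-degree graphs, is \cite[Theorem 4.7 and Corollary 14.5]{W}, which has already been used in the proof of Theorem \ref{fm-2d}. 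The main obstacle is only bookkeeping: each equivalence requires bounded degree and an appropriate polynomial volume upper bound, so the volume bound on $G^{\prime}$ from the first step must be in hand before applying the last implication in the chain. No ingredient beyond the cited results appears to be necessary.
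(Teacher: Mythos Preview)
Your proposal is correct and matches the paper's own treatment: the paper does not give an independent argument but simply records that the proposition follows from Woess \cite[Theorems 14.12 and 14.19]{W}. Your write-up is just an explicit unpacking of those citations (volume growth via the quasi-inverse map, on-diagonal decay via the $(\mathrm{IS}_2)$/Nash characterization and its rough-isometry stability), so the approach is essentially identical.
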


\begin{proof}[Proof of Theorem \ref{fluc-2d}]
First we show \eqref{fluc-2d-p0}.  
Let $| \cdot |_{\infty}$ be the infinite norm of $\Z^2$. 
Let $\widetilde \Z^2 = (\Z^2, E(\widetilde \Z^2))$ be the graph whose vertices and edges are $\Z^2$ and $\left\{\{x,y\} :  |x-y|_{\infty} = 1\right\}$.  
Then, we can show that 
\[ \lim_{n \to \infty} n P_{\widetilde \Z^2}^{0}(S_{2n} = 0) = \frac{2}{3 \pi},  \] 
and hence, 
\[ \lim_{n \to \infty} \frac{\log n}{n} E_{\widetilde \Z^2}^0 [R_n] = \frac{3}{2} \pi.  \]

We can show Theorem \ref{fm-2d} for $\widetilde \Z^2$ instead of $\Z^2$. 
Hence, if $G$ is a finite modification of $\widetilde \Z^2$, not $\Z^2$, then, for every $o \in G$, 
 \[ \lim_{n \to \infty} \frac{\log n}{n} E_{G}^{o} [R_n] = \frac{3}{2} \pi.  \]

Then, 
the desired graph $G$ can be constructed in a manner similar to the proof of \cite[Theorem 1.3]{O14}, so we omit details.       
By the construction, we have that 
\[ \liminf_{n \to \infty} \frac{E_{G}^0 [R_n]}{n / \log n}  = r_{\inf} = \pi <  \frac{3}{2} \pi = r_{\sup} = \limsup_{n \to \infty} \frac{E_{G}^0 [R_n]}{n / \log n}.  \]
Thus we have \eqref{fluc-2d-p0}. 
The graph $G$ is roughly isometric to $\Z^2$, and by using this and Proposition \ref{Nash-2nd}, we have  that $G$ is uniformly weakly recurrent. 

Now we show \eqref{SLLN-fluc}. 

\begin{Lem}\label{SLLN-fluc-lem}
Assume that for some $d > 1$, $G$ is Ahlfors $d$-regular and satisfies the sub-Gaussian heat kernel estimates with walk dimension $d$. 
\begin{equation}\label{different-mean} 
\liminf_{n \to \infty} \frac{E^x[R_n]}{n / \log n} < \limsup_{n \to \infty} \frac{E^x[R_n]}{n / \log n}. 
\end{equation} 
Then, 
\begin{equation}\label{different-c}
c_{\inf} < c_{\sup},  
\end{equation}
where $c_{\inf}$ and $c_{\sup}$ are constants introduced in Theorem \ref{SLLN}.  
\end{Lem}

\begin{proof}[Proof of Lemma \ref{SLLN-fluc-lem}]
Assume that \eqref{different-c} fails. 
Then, we can let $c = c_{\inf} = c_{\sup}$ and 
\[ \lim_{n \to \infty} \frac{R_n}{n / \log n} = c, \ \textup{ $P^x$-a.s.} \]
By the estimate \eqref{decay-fast-1} in the final part of the proof of Theorem \ref{WLLN-UC} (i), we have that 
\[ \sup_{x \in V(G), n \ge 1} \left(\frac{\log n}{n}\right)^2 E^x [R_n^2] < +\infty, \]
and hence $\{R_n/(n/\log n)\}_n$ is uniformly integrable. 
Hence,
\[ \lim_{n \to \infty} \frac{R_n}{n / \log n} = c, \ \textup{ in } L^1(P^x). \]
This contradicts \eqref{different-mean}.   
\end{proof}

Since by \cite{BB04} sub-Gaussian heat kernel estimates are stable with respect to rough isometries, 
 $G$ is Ahlfors $d$-regular and satisfies the sub-Gaussian heat kernel estimates with walk dimension $d$ for some $d > 1$, and we have \eqref{SLLN-fluc}. 
\end{proof}

\begin{Rem}
(i) In the proof of Theorem \ref{fm-2d}, we adopt the strategy similar to the one by showing the corresponding result for the Wiener sausage in a bounded modification for $\R^2$ (\cite[Theorem 1.4 (ii)]{O-ws}).
However, for the Wiener sausage case, it is not easy to establish a fluctuation result  corresponding Theorem \ref{fluc-2d} even if we have the convergence result for bounded modifications like \cite[Theorem 1.4 (ii)]{O-ws}.  
The technical difficulty for the continuous framework arises from the fact that the diffusion process which we consider can be arbitrarily far from its starting point in arbitrarily small time.  
This difficulty also appears if we consider continuous-time random walks. 
This is contrary to our case where it always holds that $R_n \le n$. \\
(ii) We are not sure whether a sequence of a scaled on-diagonal heat kernel $\left(\sum_{k=0}^{n} p_{k}(o,o) / \log n\right)_n$ fluctuates or not for the graph $G$ in the above proof of Theorem \ref{fluc-2d}.   
\end{Rem}

\section{Examples}

In this section, we consider examples of uniformly weakly recurrent graphs.  

\subsection{Isoradial graphs}

In this subsection, we show that Theorems \ref{WLLN-UC}, \ref{Lower} and \ref{SLLN} hold for graphs which are roughly isometric to isoradial graphs with the bounded-angles property. 

\begin{Def}[isoradial graph; Kenyon \cite{K02}]
We say that a planar graph $G$ embedded in $\mathbb C$ is {\it isoradial} if each face is inscribed in a circle with radius one. 
\end{Def}

This is equivalent to rhombic lattices, which were introduced by Duffin \cite{D68}.  
It was shown by Kenyon-Schlenker \cite{KS04} that many planar graphs admit isoradial embeddings. 
Note that an isoradial graph is necessarily infinite. 

\begin{Def}[the bounded-angles property]
Let $\{x,y\}$ be an edge of a connected isoradial graph. 
Then there exist exactly two circles which contain $\{x,y\}$.
$\{x,y\}$ and the centers of the circles form a rhombus.  
Let $2\theta_x$ be the angles at $x$  of the rhombus. 
We say that the bounded-angles property holds if there exists a constant $c \in (0, \pi/4)$ such that for every $x \in G$, 
\[ c \le \theta_x \le \frac{\pi}{4} - c. \]
\end{Def}

\begin{Exa}
The 2-dimentional square, triangle, and hexagonal lattices are all isoradial graphs satisfying the bounded-angles property. 
\end{Exa}

\begin{Lem}\label{AP}
If an isoradial graph satisfy the bounded-angles property, then, 
there exist two positive constants $c_1, c_2$ such that for every edge $\{x, y\} \in E(G)$, 
$$c_1 \le |x-y| \le c_2,$$
and 
$$c_1  \le d_{\star}(x,y) \le c_2,$$
where $d_{\star}(x,y)$ denotes the length of the dual edge of $\{x,y\}$. 
\end{Lem}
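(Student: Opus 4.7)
The plan is to exploit the rhombus structure implicit in the isoradial embedding. Fix an edge $\{x,y\} \in E(G)$ and let $O_1, O_2$ denote the centers of the two face-circles containing $\{x,y\}$. Since each face is inscribed in a circle of radius one, the four segments $xO_1$, $O_1 y$, $yO_2$, $O_2 x$ each have length one; hence $x, O_1, y, O_2$ are the consecutive vertices of a rhombus of unit side whose two diagonals are $\overline{xy}$ (the given edge of $G$) and $\overline{O_1 O_2}$ (the corresponding edge of the planar dual, so that $|O_1 - O_2| = d_{\star}(x,y)$).

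With the interior angle of the rhombus at $x$ equal to $2\theta_x$, the diagonals of a rhombus bisect its vertex angles, so elementary planar trigonometry in the rhombus yields
\[ |x - y| = 2\cos\theta_x, \qquad d_{\star}(x,y) = 2\sin\theta_x. \]
Explicitly, place $x$ at the origin with $\overline{xy}$ on the positive real axis; then $O_1 = (\cos\theta_x,\sin\theta_x)$, $O_2 = (\cos\theta_x, -\sin\theta_x)$, and $y = (2\cos\theta_x, 0)$, from which both formulas are immediate.

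Under the bounded-angles hypothesis $c \le \theta_x \le \pi/4 - c$, both $\cos\theta_x$ and $\sin\theta_x$ lie in a compact subinterval of $(0,1)$ that is independent of the edge. Consequently one may take
\[ c_1 := 2\min\bigl\{\sin c,\, \cos(\tfrac{\pi}{4} - c)\bigr\}, \qquad c_2 := 2\max\bigl\{\cos c,\, \sin(\tfrac{\pi}{4} - c)\bigr\}, \]
and all four required inequalities follow at once. No serious obstacle is anticipated: the lemma is essentially a geometric translation of the bounded-angles property into the Euclidean lengths of primal and dual edges, and the only content is the identification of $|x-y|$ and $d_{\star}(x,y)$ as the two diagonals of the associated unit rhombus.
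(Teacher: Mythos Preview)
Your argument is correct. The paper itself states this lemma without proof, evidently regarding it as an immediate geometric consequence of the definitions; your write-up supplies exactly the details one would expect, namely identifying $\overline{xy}$ and $\overline{O_1O_2}$ as the two diagonals of the unit rhombus and reading off $|x-y|=2\cos\theta_x$, $d_\star(x,y)=2\sin\theta_x$, from which the bounded-angles hypothesis yields uniform two-sided bounds. There is nothing to compare against in the paper beyond the bare statement, and your proof is the natural one.
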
 

\begin{Def}[boundary]
For $\Omega \subset V(G)$, we let 
$$\partial \Omega := \left\{ \{x,y\} \in E(G) | x \in \Omega, y \in V(G) \setminus \Omega \right\}.$$
\end{Def}

\begin{Prop}
An isoperimetric inequality of order 2 holds for every isoradial graph $G$ with  the bounded-angles property, specifically,  
there exists a positive constant $c$ such that for every non-empty finite subset $\Omega$ of $V(G)$, 
\[ \left|\partial \Omega\right| \ge c |\Omega|^{1/2}. \]
\end{Prop}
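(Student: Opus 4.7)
The plan is to reduce the graph-theoretic isoperimetric inequality to the classical planar isoperimetric inequality in $\mathbb{C}$, exploiting the fact that by Lemma \ref{AP} every edge of $G$ has Euclidean length in a fixed interval $[c_1,c_2]$, that each vertex has bounded degree, and that each face has angles uniformly bounded away from $0$ and $\pi$ (since faces are inscribed in unit circles and the rhombic angles are pinched).

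First I would associate to each finite $\Omega\subset V(G)$ a closed planar region $D(\Omega)\subset\mathbb{C}$, namely the union of all closed faces $f$ of $G$ whose vertex set meets $\Omega$. By the bounded–angles property, every face of $G$ has Euclidean area bounded below by some constant $a_1>0$ and above by some constant $a_2$, and each vertex belongs to a uniformly bounded number of faces. Consequently
\[ c_3\,|\Omega|\ \le\ \operatorname{Area}(D(\Omega))\ \le\ c_4\,|\Omega|. \]
Next I would bound the length of the topological boundary $\partial_{\mathrm{top}} D(\Omega)$ in $\mathbb{C}$. Each arc of $\partial_{\mathrm{top}} D(\Omega)$ lies along an edge $e=\{x,y\}\in E(G)$ separating a face included in $D(\Omega)$ from one not included; such an $e$ is either itself in $\partial\Omega$, or else its two endpoints are both in $\Omega$ but share a face whose third or fourth vertex lies outside $\Omega$. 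In both cases $e$ can be charged to an edge of $\partial\Omega$ lying in a uniformly bounded graph-neighbourhood, and each edge of $\partial\Omega$ receives at most a bounded number of charges by the uniform bounds on degree and face size. Together with $|e|\le c_2$ this gives $\operatorname{length}(\partial_{\mathrm{top}} D(\Omega))\le c_5\,|\partial\Omega|$.

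Finally I would invoke the planar isoperimetric inequality applied to $D(\Omega)$ (summed over connected components and counting both outer and inner boundary arcs, both of which are captured by $\partial_{\mathrm{top}} D(\Omega)$):
\[ \operatorname{length}(\partial_{\mathrm{top}} D(\Omega))^{2}\ \ge\ 4\pi\operatorname{Area}(D(\Omega)). \]
Combining the three inequalities yields $|\partial\Omega|^{2}\ge (4\pi c_3/c_5^{2})\,|\Omega|$, which is the claim.

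The main obstacle is the boundary charging in the middle step: one must verify carefully that \emph{every} arc of $\partial_{\mathrm{top}} D(\Omega)$ can be paid for by a nearby edge of $\partial\Omega$ with only $O(1)$ double-counting. The bounded–angles property is crucial here since it controls both the combinatorial structure (bounded degree, bounded face size) and the Euclidean geometry (bounded edge/dual-edge length from Lemma \ref{AP}), so that ``nearby in the graph'' and ``nearby in $\mathbb{C}$'' agree up to multiplicative constants. As an alternative, one could instead observe that bounded angles make $G$ roughly isometric to $\mathbb{Z}^{2}$, and then invoke the stability of the order-$2$ isoperimetric inequality under rough isometry (\cite[Theorem~4.7 and Corollary~14.5]{W}); however, the direct planar argument above has the virtue of not requiring any auxiliary graph and of producing explicit constants.
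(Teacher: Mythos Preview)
Your overall strategy---pass to a planar region and invoke the Euclidean isoperimetric inequality---is the same as the paper's, but you build the region differently. The paper uses \emph{duality}: it takes $K$ to be the closed region enclosed by the dual edges of the edges in $\partial\Omega$. In an isoradial graph the dual of an edge cut is a union of closed curves in the plane, so $\partial K$ is \emph{exactly} the union of the dual edges of $\partial\Omega$; Lemma~\ref{AP} then gives $L(\partial K)\le c_2\,|\partial\Omega|$ with no charging argument at all, and the same lemma gives $|K|\ge \tfrac{\pi}{4}c_1^2\,|\Omega|$ since the vertices of $\Omega$ lie in $K$ and are $c_1$-separated. Your construction via primal faces is a legitimate alternative, but it trades the clean boundary identification for the bookkeeping you flag as ``the main obstacle''.

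More importantly, the case analysis you give for that step is wrong. If $e=\{x,y\}$ lies on $\partial_{\mathrm{top}}D(\Omega)$, then one of its two adjacent faces, say $f_2$, satisfies $V(f_2)\cap\Omega=\emptyset$; since $x,y\in V(f_2)$, this forces $x\notin\Omega$ and $y\notin\Omega$. So neither of your two cases (``$e\in\partial\Omega$'' or ``both endpoints in $\Omega$'') ever occurs. The repair is easy: the other adjacent face $f_1$ contains some $z\in\Omega$, and walking along $\partial f_1$ from $z$ to $x$ you must cross an edge of $\partial\Omega$; charge $e$ to that edge. Since each face has $O(1)$ edges and each edge lies on at most two faces, each edge of $\partial\Omega$ absorbs $O(1)$ charges, and your inequality follows. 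So your scheme does go through once this step is corrected, but the paper's dual route sidesteps it entirely.
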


\begin{proof}
This assertion follows from Lemma \ref{AP}, the duality, and the isoperimetric inequality on the Euclid plane. 

Let $K$ be the closure of the domain surrounded by $\partial \Omega$. 
Then by the isoperimetric inequality on the Euclid plane, we have that
\[ L(\partial K) \ge 2 \sqrt{\pi} |K|^{1/2}, \]
where $L(\partial K)$ denotes the arc length of $\partial K$ and $|K|$ denotes the Lebesgue measure of $K$. 

Since $\partial K$ consists of the dual edges of $\partial \Omega$, 
it follows from Lemma \ref{AP} that 
\[ |\partial \Omega| = |\{\textup{dual edges of $\partial \Omega$}\}| \ge \frac{1}{c_2} L(\partial K), \]
where $c_2$ is the constant in Lemma \ref{AP}. 

It follows from Lemma \ref{AP} that 
\[  |K| \ge \frac{\pi}{4} c_1^2 |\Omega|, \]
where $c_1$ is the constant in Lemma \ref{AP}. 

Thus we have that 
\[  |\partial \Omega| \ge \frac{c_1}{c_2} \pi  |\Omega|. \]
\end{proof}

The above assertion implies the Nash inequality of order one. 

\begin{Prop}[Nash inequality of order 1]
For every isoradial graph $G$ with  the bounded-angles property, 
there exists a positive constant $C$ such that for every $n \ge 1$, 
\[ p_{n}(x,x) +p_{n+1}(x,x)  \le Cn^{-1}. \]
\end{Prop}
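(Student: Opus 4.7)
The plan is to pass from the isoperimetric inequality of order $2$ just established to an on-diagonal heat kernel upper bound via the standard functional-analytic chain, which on graphs is entirely classical. Concretely, I would invoke the equivalence between discrete isoperimetric inequalities, Nash-type functional inequalities, and on-diagonal heat kernel decay that is catalogued in Woess \cite[Theorems 14.3, 14.12, and 14.19]{W} (and also in Barlow \cite[Chapter 14]{B17}). Since the bounded-angles property forces uniformly bounded vertex degree by Lemma \ref{AP}, the simple random walk on $G$ is reversible with bounded weights, so the results of that reference apply verbatim.

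First, the isoperimetric inequality $|\partial \Omega| \ge c |\Omega|^{1/2}$ for every non-empty finite $\Omega \subset V(G)$ implies, by a Cheeger-type argument for the $L^1$-Sobolev inequality together with Cauchy--Schwarz, the Nash inequality
\[ \|f\|_2^{4} \le C\, \mathcal{E}(f,f)\, \|f\|_1^{2}, \]
for every finitely supported $f : V(G) \to \mathbb{R}$, where $\mathcal{E}(f,f) = \tfrac{1}{2} \sum_{\{x,y\} \in E(G)} (f(x)-f(y))^2$ is the Dirichlet form of the simple random walk on $G$. Second, Nash's iteration argument applied to the semigroup $P_n$ converts this functional inequality into the on-diagonal estimate $p_{2n}(x,x) \le C/n$ for every $n \ge 1$ and every $x \in V(G)$. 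The constant depends only on the Nash constant and on the uniform bound on vertex degree.

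Finally, to obtain the bound on $p_n(x,x) + p_{n+1}(x,x)$ (which accommodates potential bipartiteness of $G$), I would combine the bound on $p_{2n}(x,x)$ with the standard observation that $p_{2n}(x,x)$ is non-increasing in $n$ and with the Cauchy--Schwarz-type estimate $p_{n+1}(x,x)^2 \le p_{n}(x,x)\, p_{n+2}(x,x)$ (a consequence of reversibility), which together give the same $O(n^{-1})$ bound on both the even and odd parts.

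There is no real obstacle, as each step is a citation to a standard reference that this paper already relies on (notably \cite[Theorems 14.12 and 14.19]{W}, cited in Proposition \ref{Nash-2nd}); the only thing to verify is that the hypotheses of those references hold, which reduces to the two observations that $G$ has uniformly bounded degree (Lemma \ref{AP}) and that the isoperimetric inequality $(IS_2)$ has just been proved.
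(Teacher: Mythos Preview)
Your proposal is correct and matches the paper's approach exactly: the paper's entire ``proof'' is the single sentence ``The above assertion implies the Nash inequality of order one,'' relying on the same standard chain $(IS_2) \Rightarrow$ Nash functional inequality $\Rightarrow$ on-diagonal decay that you have spelled out in detail. You have supplied more justification than the paper itself (the bounded-degree check via Lemma~\ref{AP}, the explicit Nash functional inequality, and the even/odd handling), but the underlying argument is identical.
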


Now we consider the relative isoperimetric inequality for isoradial graphs with the bounded-angles property.  

\begin{Lem}[relative isoperimetric inequality]
The  relative isoperimetric inequality  holds, that is, 
for every finite subset $B$ of $V(G)$, 
there exists $C_B$ such that for every finite $A \subset B$ such that $0 < \mu(A) \le \mu(B)/2$, 
\[ |\partial_B A| \ge C_B \mu(A), \]
where we let 
\[ |\partial_B A| := \sum_{x \in A, y \in B\setminus A} \mu_{xy}. \]
\end{Lem}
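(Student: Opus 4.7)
The plan is to prove the lemma by a straightforward finiteness argument, combined with a brief sanity check of positivity. First I would note that because $B$ is finite, the family $\mathcal{A}_B := \{A \subset B : 0 < \mu(A) \le \mu(B)/2\}$ is itself finite. Set
\[ C_B := \min_{A \in \mathcal{A}_B} \frac{|\partial_B A|}{\mu(A)}. \]
The minimum is attained over this finite set, and by construction the inequality $|\partial_B A| \ge C_B \mu(A)$ holds for every $A \in \mathcal{A}_B$. So the only real content is to verify that the constant $C_B$ thus produced is strictly positive.

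Second I would verify $C_B > 0$. Since $\mu(A) > 0$ for every admissible $A$, this reduces to showing $|\partial_B A| > 0$. By definition $|\partial_B A| = \sum_{x \in A,\, y \in B \setminus A} \mu_{xy}$, and each edge weight $\mu_{xy}$ is positive, so it suffices to exhibit at least one edge of $G$ with one endpoint in $A$ and the other in $B \setminus A$. When the induced subgraph $G[B]$ is connected, such an edge necessarily exists for every proper non-empty subset $A$ of $B$. Otherwise one decomposes $B$ into its connected components in $G$ and applies the argument on each component separately, so without loss of generality the interesting case is $G[B]$ connected.

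The main obstacle: since the statement is a purely qualitative existence assertion with a constant allowed to depend on $B$, there is in fact essentially no obstacle, and the proof is a one-line compactness argument once finiteness of $B$ is exploited and the mild connectivity assumption on $G[B]$ is handled. The genuinely substantive result in this direction, which one would actually want for applications to Poincar\'e-type inequalities, is a quantitative bound of the form $C_B \ge c / \operatorname{diam}(B)$. Proving such a scaling would require combining the absolute isoperimetric inequality of order two established in the preceding proposition with Lemma \ref{AP}, so as to transfer the Euclidean planar isoperimetric estimate to the graph $G$, and then to localize it inside $B$ by a coarea-type layer-cake decomposition. In the form stated here, however, the lemma serves only as a preparatory existence statement.
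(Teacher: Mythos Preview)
Your finiteness argument is valid when $G[B]$ is connected, and you correctly observe that the literal statement is then a one-line compactness fact. Your handling of the disconnected case, however, is not a reduction: if $A$ is an entire connected component of $G[B]$ with $0<\mu(A)\le\mu(B)/2$, then $|\partial_B A|=0$ while $\mu(A)>0$, so no positive $C_B$ exists. Proving the inequality on each component separately does not yield it on $B$. This is really a defect of the lemma as stated rather than of your argument; it should be read with $B$ a ball, or at least with $G[B]$ connected.

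Your route is genuinely different from the paper's. The paper never invokes finiteness of $\mathcal A_B$: instead it associates to $A\subset B$ the planar regions $K_A\subset K_B$ enclosed by the dual edges of $\partial A$ and $\partial B$, applies the Euclidean relative isoperimetric inequality to $K_A$ inside $K_B$, and then uses Lemma~\ref{AP} (comparability of primal and dual edge lengths under the bounded-angles property) to transfer the estimate back to the graph. The constant $C_B$ thus produced is essentially the Euclidean constant $C_{K_B}$, which carries the correct scale dependence in the radius needed downstream for the Poincar\'e inequality and hence for the Gaussian heat kernel bounds. Your compactness argument, by contrast, gives a constant with no quantitative control and is therefore vacuous for that application---precisely the point you yourself make in your final paragraph.
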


\begin{proof}
Let $K_A$ and $K_B$ be closed sets surrounded by the dual edges of $\partial A$ and $\partial B$. 
For closed subsets $C \subset D$ in the Euclid plane, let $P(C, D)$ be the perimeter of $C$ in $D$.  
By the relative isoperimetric inequality in the Euclid plane, 
\[  P(K_A, K_B)^2 \ge C_{K_B} \min\left\{|K_A|, |K_B \setminus K_A| \right\} \ge C |K_A|. \]
Here $|K_A| \ge |K_B \setminus K_A| $ might happen, however by $\mu(A) \le \mu(B)/2$ and Lemma \ref{AP}, 
there exists a positive universal constant $c$ such that 
\[ |K_B \setminus K_A| \ge c|K_A|. \]

By this and Lemma \ref{AP}, 
\[  |\partial_B A| \ge c C_{K_B} P(K_A,  K_B) \ge  C |K_A| \ge C |A|. \]
\end{proof}

Since the relative isoperimetric inequality implies the Poincar\'e inequality (see \cite[Section 3.3]{Ku} for this fact), 
we have that 
\begin{Prop}[Gaussian heat kernel estimates]
There exist four constants $c_3, c_4, c_5, c_6 > 0$ such that for every $n \ge 1$ and every $x, y \in V(G)$, 
\[ p_{n}(x,y) \le \frac{c_3}{|B(x,n^{1/2})|} \exp\left(- c_4 \frac{d(x,y)^{2}}{n}\right), \]
and, 
\[ p_{n}(x,y) + p_{n+1}(x,y) \ge \frac{c_5}{|B(x,n^{1/2})|} \exp\left(- c_6  \frac{d(x,y)^{2}}{n} \right). \]
\end{Prop}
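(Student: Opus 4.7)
The plan is to invoke the by now standard equivalence, due to Delmotte (building on Grigor'yan and Saloff-Coste in the continuous setting), between the two-sided Gaussian heat kernel estimates stated here and the conjunction of volume doubling with the (weak) Poincar\'e inequality on balls. Two ingredients need to be checked.

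First I would verify volume doubling. By Lemma \ref{AP}, every edge of $G$ has Euclidean length bounded above and below by universal constants, so the graph distance $d$ is bi-Lipschitz equivalent to the Euclidean distance inherited from the planar embedding. Combined with a uniform bound on the vertex degree (which follows from the bounded-angles property via a packing argument around each vertex, since at least $c$ of the angular budget $2\pi$ is consumed per incident rhombus), this yields $c_1 n^2 \le |B(x,n)| \le c_2 n^2$ for all $x \in V(G)$ and $n \ge 1$; in particular volume doubling and Ahlfors $2$-regularity hold uniformly.

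Second, the relative isoperimetric inequality established in the previous lemma, combined with \cite[Section 3.3]{Ku} as already cited, gives the weak Poincar\'e inequality on balls: there exist $\sigma \ge 1$ and $C > 0$ such that for every $x \in V(G)$, every $r \ge 1$, and every $f : B(x,\sigma r) \to \mathbb{R}$,
\[ \sum_{y \in B(x,r)} \bigl| f(y) - \overline{f}_{B(x,r)} \bigr|^2 \deg(y) \le C r^2 \sum_{\{y,z\} \in E(G),\, y,z \in B(x,\sigma r)} |f(y) - f(z)|^2. \]
Feeding volume doubling and this Poincar\'e inequality into Delmotte's theorem then yields both the Gaussian upper bound and the (near-diagonal) Gaussian lower bound of the statement with some universal constants $c_3, c_4, c_5, c_6 > 0$.

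The main obstacle is ensuring the constants are genuinely uniform in the ball. The constant $C_B$ in the preceding lemma was allowed to depend on $B$, whereas Delmotte's theorem requires a scale-invariant Poincar\'e inequality to produce uniform Gaussian estimates. I expect to upgrade the isoperimetric argument by rescaling the associated Euclidean domains $K_A \subset K_B$ and exploiting the scale invariance of the bounded-angles property, so that the relative isoperimetric constant and the resulting Poincar\'e constant depend only on the universal constants of Lemma \ref{AP}. Once this uniformity is in place, the invocation of Delmotte's theorem is immediate.
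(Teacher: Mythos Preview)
Your approach is essentially the same as the paper's, which simply records in one sentence that the relative isoperimetric inequality yields the Poincar\'e inequality (citing \cite[Section 3.3]{Ku}) and then states the Gaussian estimates; the paper leaves the volume doubling step and the invocation of Delmotte's characterization implicit, while you spell them out. Your observation about the uniformity of $C_B$ is well taken---the paper's lemma is stated for arbitrary finite $B$ with a $B$-dependent constant, but when $B$ is a graph ball the associated Euclidean domain $K_B$ is, by Lemma~\ref{AP}, comparable to a Euclidean disc, for which the planar relative isoperimetric constant is scale-invariant; this is the upgrade you anticipate, and the paper simply suppresses it.
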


Since the Gaussian heat kernel estimates and the Ahlfors regularity are both stable with respect to rough isometries (see \cite[Theorem 3.1]{GT} and \cite{BB04} for this fact), 
Theorems \ref{WLLN-UC} and \ref{SLLN} hold also for graphs which is roughly isometric to isoradial graphs with the bounded-angles property.

\subsection{Graphs with spectral dimension two and polynomial volume growth with arbitrarily degree}

We say that $G$ satisfies $(V_d)$, or is {\it Ahlfors $d$-regular}, if 
there exists a constant $c > 1$ such that for every $x \in G$ and $r \ge 1$,   
\[ c^{-1}r^{d} \le |B(x,r)| \le c r^d. \]
We show that for every $d \ge 2$, there exists an Ahlfors $d$-regular graph satisfying \eqref{posi-fi}.  

We say that $G$ satisfies $(E_d)$ if      
there exists a constant $c > 1$ such that for every $x \in G$ and $r \ge 1$,   
\[ c^{-1}r^{d} \le E^x \left[T_{B(x,r)^c}\right] \le c r^d. \]

We say that $G$ satisfies the {\it elliptic Harnack inequality} (EHI) if there exists a constant $c > 1$ such that for every $x \in G$ and $r \ge 1$ and every non-negative function $h : B(x, 2r+1) \to [0, +\infty)$ which is harmonic on $B(x, 2r)$,  
\[ \sup_{y \in B(x,r)} h(y) \le c \inf_{y \in B(x,r)} h(y). \]

Let $d \ge 2$. Then, by \cite[Theorem 2 and Lemma 1.3]{B04-1}, 
there exists a graph $G_d$ satisfying $(V_d), (E_d)$ and (EHI), and, $G_d$ satisfies sub-Gaussian heat kernel estimates with spectral dimension two. 
By using \cite[Theorem 3.1]{GT} and \cite{BB04}, 
we see that every graph which is roughly isometric to $G_d$ satisfies sub-Gaussian heat kernel estimates with spectral dimension two, and hence, 
for some $d > 1$, $G$ is Ahlfors $d$-regular and satisfies the sub-Gaussian heat kernel estimates with walk dimension $d$. 

Recently, Murugan \cite[Examples 7.3 and 7.5]{M} give examples of infinite planer graphs satisfying sub-Gaussian heat kernel estimates with spectral dimension two.   
\cite[Examples 7.3 and 7.5]{M} are simpler than the examples of \cite{B04-1} stated above. 
\cite[Example 7.3]{M} deals with a graphical snowball which is Ahlfors $\log_3 (13)$-regular and whose walk dimension is $\log_3 (13)$.  
\cite[Example 7.5]{M} deals with a graph version of a regular pentagonal tiling which is Ahlfors $\log_2 (6)$-regular and whose walk dimension is $\log_2 (6)$.

\begin{Rem}[Weakly recurrent, but not uniformly weakly recurrent graph] 
The unique infinite cluster of 2-dimensional supercritical percolation is a.s. weakly recurrent and its spectral dimension is two. 
However, almost surely, it is not uniformly weakly recurrent, because it satisfies Gaussian heat kernel estimates, and has one-dimensional object having arbitrarily size. 
See \cite{B04-2} for more details.      
\end{Rem} 

\subsection{Application to law of iterated logarithms for random walks on Lamplighter graphs} 

\cite{KuN} considered the laws of the iterated logarithms (LIL) for a switch-walk-switch random walk on a lamplighter graph under the condition that the random walk on the underlying graph satisfies sub-Gaussian heat kernel estimates. 
\cite[Theorem 2.4]{KuN} states LILs for the random walk on a lamplighter graph in the case that the spectral dimension of the underlying graph is {\it not} two. 
They found some relationship between the random walk on a lamplighter graph and the range of the random walk on the underlying graph. 
Their proof uses an improved version of a result for the range of random walk \cite[Corollary 2.3]{O14}. 
See \cite[Proposition 5.7]{KuN}.
However, the case that the spectral dimension is two, which we deal with below, remains to be open.  
See \cite[Remark 2.5 (4)]{KuN}. 

We refer readers to \cite{KuN} for terminologies. 
Denote  the wreath product of a graph $G$ and $\{0,1\}$ by $\{0,1\} \wr G$. 
Let $d$ be the graph distance on $\{0,1\} \wr G$, and $\{Y_n\}_n$ be the simple random walk on $\{0,1\} \wr G$.  

\begin{Prop}
Let $G$ be a uniformly weakly recurrent graph. 
Then, for every vertex $x$ of $\{0,1\} \wr G$ and every $\epsilon > 0$, \\
(i) 
\begin{equation}\label{lamp-u} 
\limsup_{n \to \infty} \frac{d(Y_0, Y_n)}{n / \log n} \le \left(2\max_{x \in V(G)} \deg(x) + 1\right) r_{\sup}, \ \textup{$P_{\{0,1\} \wr G}^x$ -a.s.} 
\end{equation} 
and, 
\begin{equation}\label{lamp-l}   
\lim_{n \to \infty} P_{\{0,1\} \wr G}^x \left( \frac{d(Y_0, Y_n)}{n / \log n} \le \frac{r_{\inf}}{4}  - \epsilon \right) = 0, 
\end{equation} 
(ii) If furthermore $G$ is Ahlfors $d$-regular for some positive $d > 0$, then, 
\begin{equation}\label{lamp-l-as}   
\liminf_{n \to \infty} \frac{d(Y_0, Y_n)}{n / \log n} \ge \frac{r_{\inf}}{4}, \ \textup{$P_{\{0,1\} \wr G}^x$ -a.s.} 
\end{equation} 
\end{Prop}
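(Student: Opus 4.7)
The plan is to reduce all three estimates to statements about the range $R_n := |\{S_0, \dots, S_n\}|$ of the projected walk $S_k := \pi(Y_k)$ on $G$, via the correspondence between the lamplighter distance and the range developed in \cite[Section 5]{KuN}, and then to invoke Theorems \ref{WLLN-UC} and \ref{Lower}.

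First, I would record that up to a deterministic time change coming from the fact that only a bounded fraction of $Y$-steps correspond to moves of the projection, $\{S_k\}$ is essentially the simple random walk on $G$ (this is the content of the projection lemma of \cite{KuN}), so we may freely apply the range theorems to $R_n$. Next I would establish a deterministic sandwich bound: writing $\Delta = \max_{x \in V(G)} \deg(x)$, a travelling-salesman-type strategy (visit every vertex of the range, toggle each lamp that must be toggled at a cost of at most $2\Delta$ edges of $\{0,1\} \wr G$, then travel from $S_0$ to $S_n$) gives
\[ d(Y_0, Y_n) \le (2\Delta + 1) R_n + d_G(S_0, S_n), \]
while any path between $Y_0$ and $Y_n$ in $\{0,1\} \wr G$ must toggle each lamp in the symmetric difference, giving
\[ d(Y_0, Y_n) \ge |\operatorname{supp}(\eta_n \oplus \eta_0)|. \]
Coupled with the observation that every site visited by $S_k$ receives at least one fair Bernoulli lamp flip independent of the trajectory (so conditionally on the path, the final parity at each visited site is uniform on $\{0,1\}$), a standard Chernoff/Azuma estimate together with Borel--Cantelli yields $|\operatorname{supp}(\eta_n \oplus \eta_0)| \ge R_n / 4$ eventually almost surely, the factor $1/4$ absorbing both the probabilistic fluctuations and the time-change factor.

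Given these bounds, the upper estimate \eqref{lamp-u} follows from Theorem \ref{WLLN-UC}(i) applied to $R_n$, once one notes that $d_G(S_0, S_n) = o(n / \log n)$ almost surely (an immediate consequence of sub-Gaussian heat kernel upper bounds, which hold under uniform weak recurrence as the paper recalls after Theorem \ref{Lower}). The weak lower bound \eqref{lamp-l} then follows directly from Theorem \ref{WLLN-UC}(ii) combined with the probabilistic lower sandwich. Finally, \eqref{lamp-l-as} follows from Theorem \ref{Lower}: under the additional Ahlfors $d$-regularity assumption in (ii) together with uniform weak recurrence, all hypotheses of Theorem \ref{Lower} are in force, so $\liminf_n R_n/(n/\log n) \ge r_{\inf}$ almost surely, which when combined with the $R_n / 4$ lower bound gives the required $r_{\inf}/4$ bound.

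The main obstacle will be quantifying the probabilistic lower sandwich: establishing $|\operatorname{supp}(\eta_n \oplus \eta_0)| \ge R_n / 4$ eventually almost surely with sufficient uniformity in the starting point, while handling the correlations that arise because sites of $G$ may be visited many times by $S_k$ and the lamp flips at them are functionals of the same underlying trajectory. This is in the spirit of \cite[Proposition 5.8]{KuN}, but must be reworked in the spectral-dimension-two setting where $R_n$ grows only like $n / \log n$ rather than linearly; in particular, one must be careful that the tail estimates from Theorems \ref{WLLN-UC} and \ref{Lower} are strong enough to survive the Borel--Cantelli step used for the lamp-flip concentration.
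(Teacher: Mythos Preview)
Your approach is essentially the paper's: the proof there is a single sentence invoking Theorems~\ref{WLLN-UC} and~\ref{Lower} together with \cite[Propositions~5.1 and~5.2]{KuN}, which supply exactly the sandwich between $d(Y_0,Y_n)$ and the range $R_n$ of the projected walk that you sketch. The only correction is that the relevant inputs from \cite{KuN} are Propositions~5.1 and~5.2 rather than Proposition~5.8; those comparison bounds between the lamplighter distance and the range are insensitive to the spectral dimension of $G$, so no reworking for the $d_s=2$ case is required and the concern in your final paragraph is misplaced.
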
 

This assertion follows from Theorems \ref{WLLN-UC} and \ref{Lower}, and \cite[Propositions 5.1 and 5.2]{KuN}. 

\begin{Rem}
(i) For \eqref{lamp-u}, \eqref{lamp-l} and \eqref{lamp-l-as}, 
$G$ does not need to satisfy both of \eqref{sGHK-upper} and \eqref{sGHK-lower} as in \cite[Assumption 2.2 (4)]{KuN}.  
See the proofs of \cite[Propositions 5.1 and 5.2]{KuN}. \\
(ii) \eqref{lamp-u}, \eqref{lamp-l} and \eqref{lamp-l-as} hold for the graphs in the above subsections.    
\end{Rem} 

\appendix 

\section{Intersection of two independent random walks on graph}

In this section we consider intersections of two independent random walks starting at a common point. 
This case is relatively easier to handle than Section 3, but on the other hand the arguments in this section are similar to Section 3. 

\begin{Prop}\label{intersect-main}
Let $S^{(1)}$ and $S^{(2)}$ be two independent simple random walks on $G$.  
Let $P^{x,y}$ be the joint law of $S^{(1)}$ and $S^{(2)}$ whose starting points are $x$ and $y$ respectively.  
Then there exists a positive constant $C$ such that for every $\epsilon > 0$ and every $x \in V(G)$,
\[ P^{x,x}\left( \left| \{S^{(1)}_0, \dots, S^{(1)}_n\} \cap  \{S^{(2)}_0, \dots, S^{(2)}_n\}  \right| \ge \frac{\epsilon n}{\log n}\right) \le \frac{C (\log\log n)^2}{\epsilon^2 (\log n)^{2}} \]
\end{Prop}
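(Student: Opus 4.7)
The plan is to bound the second moment of $|I_n|$, where $I_n := \{S^{(1)}_0, \ldots, S^{(1)}_n\} \cap \{S^{(2)}_0, \ldots, S^{(2)}_n\}$, and then apply Chebyshev's inequality. Expanding $|I_n|^2$ as a double sum over $V(G)\times V(G)$ and using the independence of $S^{(1)}$ and $S^{(2)}$ gives
\[ E^{x,x}[|I_n|^2] = \sum_{y_1, y_2} P^x(T^0_{y_1} \le n,\ T^0_{y_2} \le n)^2, \]
where $T^0_y := \inf\{k \ge 0 : S_k = y\}$. Chebyshev's inequality will then reduce the proposition to showing that this sum is $O\bigl(n^2 (\log\log n)^2/(\log n)^4\bigr)$.

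The next step would be a first-hit decomposition via the strong Markov property. Splitting according to which of $y_1, y_2$ is hit first and then restarting,
\[ P^x(T^0_{y_1} \le n, T^0_{y_2} \le n) \le P^x(T^0_{y_1} \le n)\, P^{y_1}(T^0_{y_2} \le n) + P^x(T^0_{y_2} \le n)\, P^{y_2}(T^0_{y_1} \le n). \]
Squaring, applying $(a+b)^2 \le 2(a^2+b^2)$, and using the $y_1\leftrightarrow y_2$ symmetry of the outer sum reduces the estimate to bounding
\[ \Sigma := \sum_{y_1, y_2} \bigl[P^x(T^0_{y_1} \le n)\, P^{y_1}(T^0_{y_2} \le n)\bigr]^2. \]

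I would then split $\Sigma$ into four pieces according to whether each of $d(x, y_1)$ and $d(y_1, y_2)$ exceeds $r_n := (n/(\log n)^4)^{1/d}$, the threshold of Lemma \ref{hitting-self}. On the region where both distances exceed $r_n$, Lemma \ref{hitting-self} yields
\[ P^x(T^0_{y_1} \le n)^2 \le \tfrac{C\log\log n}{\log n}\, P^x(T^0_{y_1} \le n) \]
and similarly for the $y_2$-factor, so the contribution is at most $\bigl(\tfrac{C\log\log n}{\log n}\bigr)^2 E^x[R_n]\, \sup_z E^z[R_n]$, which by Theorem \ref{WLLN-UC}(iv) is $O\bigl(n^2 (\log\log n)^2/(\log n)^4\bigr)$, matching the target order. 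On each of the three ``close'' pieces, the Ahlfors $d$-regularity bounds the relevant ball of radius $r_n$ by $O(n/(\log n)^4)$; combining this with the trivial bound $P \le 1$ and/or Lemma \ref{hitting-self} produces contributions involving a smaller power of $1/\log n$, and hence of strictly smaller order. Summing the four contributions and applying Chebyshev yields the proposition.

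The main obstacle is the bookkeeping in the case analysis: one has to verify in each of the four regions that the factors of $n$ combine to $n^2$ and that the powers of $\log n$ exceed $(\log n)^4$ with at most two factors of $\log\log n$. No single step is difficult, but getting the exponents to line up is where care is needed. Compared with the self-intersection estimate of Proposition \ref{self-intersect-expectation}, the absence of the middle-point factor $P^x(S_n = z)$ and its accompanying $z$-summation is what removes an extra $\log\log n/\log n$ loss and gives the clean exponent $2$ on $\log\log n$ in the stated bound.
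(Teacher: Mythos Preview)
Your argument is correct, but the paper takes a shorter route. Instead of directly bounding the second moment by a four-region case analysis, the paper first bounds only the \emph{first} moment $E^{x,x}[|I_n|]$ (Proposition~\ref{intersect-expectation}), via a simple two-case split on $d(x,y)$ against the threshold $r_n$, and then invokes a general moment inequality (Lemma~\ref{moment}, attributed to Le~Gall--Rosen) stating that $\sup_x E^{x,x}[|I_n|^p] \le (p!)^2\bigl(\sup_x E^{x,x}[|I_n|]\bigr)^p$; taking $p=2$ gives the second-moment bound $O\bigl(n^2(\log\log n)^2/(\log n)^4\bigr)$ immediately, and Chebyshev finishes as in your plan. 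Your direct second-moment computation is perfectly valid and mirrors the self-intersection argument of Proposition~\ref{self-intersect-expectation}, but the paper's two-step structure is more economical: the first-moment bound needs only one distance cut rather than four regions, and the moment lemma then delivers all higher moments for free. The tradeoff is that your approach is self-contained, while the paper relies on the moment lemma whose proof (though omitted there) requires an additional strong-Markov/permutation argument.
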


As in \cite{L}, the estimate in Proposition \ref{intersect-main} is attributed to obtaining an upper bound for the expectation of the intersection. 

\begin{Prop}\label{intersect-expectation}
There exists a positive constant $C$ such that for every $x \in V(G)$ and $n \ge 2$, 
\[  E^{x,x}\left[ \left| \{S^{(1)}_0, \dots, S^{(1)}_n\} \cap  \{S^{(2)}_0, \dots, S^{(2)}_n\}  \right| \right] \le \frac{C n \log\log n}{(\log n)^2}\]
\end{Prop}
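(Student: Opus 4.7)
The plan is to exploit independence of $S^{(1)}$ and $S^{(2)}$ to reduce the expected intersection size to a sum of squared hitting probabilities, and then apply the dichotomy (close/far) already used in the proof of Proposition \ref{self-intersect-expectation}.

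Writing $T^0_y := \inf\{k \ge 0 : S_k = y\}$ as in Section 3, by Fubini and independence of the two random walks,
\begin{align*}
E^{x,x}\!\left[\left|\{S^{(1)}_0,\dots,S^{(1)}_n\}\cap\{S^{(2)}_0,\dots,S^{(2)}_n\}\right|\right]
&= \sum_{y \in V(G)} P^x(T^0_y \le n)\, P^x(T^0_y \le n) \\
&= \sum_{y \in V(G)} P^x(T^0_y \le n)^2.
\end{align*}
So the task reduces to bounding $\sum_y P^x(T^0_y \le n)^2$ by $C n \log\log n / (\log n)^2$.

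I would split the sum at the threshold $r_n := (n/(\log n)^4)^{1/d}$, exactly the threshold used in Lemma \ref{hitting-self}. For the ``close'' part $d(x,y) \le r_n$, I bound each probability by $1$ and use Ahlfors regularity \eqref{Ahlfors} to count: $|B(x,r_n)| \le c_2 r_n^d = O(n/(\log n)^4)$, so the contribution is $O(n/(\log n)^4)$, which is much smaller than the target. For the ``far'' part $d(x,y) > r_n$, Lemma \ref{hitting-self} gives $P^x(T^0_y \le n) \le C\log\log n / \log n$, so
\[
\sum_{y : d(x,y) > r_n} P^x(T^0_y \le n)^2
\le \frac{C \log\log n}{\log n} \sum_{y} P^x(T^0_y \le n)
= \frac{C \log\log n}{\log n}\, E^x[R_n].
\]
Then the bound $\sup_x E^x[R_n] = O(n/\log n)$ from Theorem \ref{WLLN-UC}(iv), already invoked as \eqref{exp-upper}, produces $O(n \log\log n / (\log n)^2)$. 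Combining the two parts yields the claim.

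There is essentially no main obstacle; the only subtlety is that the factorization via independence collapses what was a genuinely delicate second-moment estimate in Proposition \ref{self-intersect-expectation} into a first-moment estimate, so all four cases $A_1(z),\dots,A_4(z)$ of that proof disappear and one only needs the single hitting estimate of Lemma \ref{hitting-self} together with the uniform upper bound on $E^x[R_n]$. This is also why one gains a factor of $\log\log n/\log n$ relative to Proposition \ref{self-intersect-expectation} (at the level of the first moment squared rather than the second moment), which is consistent with the sharper tail bound in Proposition \ref{intersect-main} compared with Proposition \ref{self-intersect-main}.
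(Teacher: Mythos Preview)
Your proof is correct and follows essentially the same approach as the paper: factor by independence to get $\sum_y P^x(T^0_y\le n)^2$, split near/far, bound the near part by a volume count via Ahlfors regularity, and bound the far part by Lemma~\ref{hitting-self} combined with \eqref{exp-upper}. The only cosmetic difference is your threshold $r_n=(n/(\log n)^4)^{1/d}$ versus the paper's $(n/(\log n)^2)^{1/d}$; both choices lie above the threshold required in Lemma~\ref{hitting-self} and both yield a near-part contribution dominated by the far-part bound $O(n\log\log n/(\log n)^2)$.
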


For the case that $G = \Z^2$, we have a better estimate (see \cite[Theorem 5.1]{L}).   
We conjecture that $\log\log n$ in the numerator of the right hand side of the above inequality could be replaced with $1$. 
We use estimates for hitting time distributions on graphs (cf. \cite{B17}).

\begin{proof}
By  \eqref{Ahlfors} and Lemma \ref{hitting-self},   
we have that 
 \[ E^{x,x}\left[ \left| \{S^{(1)}_0, \dots, S^{(1)}_n\} \cap  \{S^{(2)}_0, \dots, S^{(2)}_n\}  \right| \right] = \sum_{y} P^x (T_y \le n)^2 \]
\[ \le \left|B\left(x, \left(\frac{n}{(\log n)^2}\right)^{1/d}\right) \right| + \sup_{y \notin B(x, (\frac{n}{(\log n)^2})^{1/d})} P^x (T_y \le n) \sum_{y} P^x (T_y \le n)\]
\[ = O\left(\frac{n}{(\log n)^2}\right) + O\left(\frac{\log\log n}{\log n}\right) E^x [R_n].  \] 
By this and \eqref{exp-upper}, 
we have the assertion. 
\end{proof}

The following lemma is related with the moment method. 

\begin{Lem}\label{moment} 
For every $p \in \mathbb{N}$, 
\[ \sup_{x \in V(G)} E^{x,x} \left[ \left|\{S^{(1)}_0, \dots, S^{(1)}_n\} \cap \{S^{(2)}_0, \dots, S^{(2)}_n\} \right|^p \right] \]
\[\le (p!)^2 \left( \sup_{x \in V(G)} E^{x,x} \left[ \left|\{S^{(1)}_0, \dots, S^{(1)}_n\} \cap \{S^{(2)}_0, \dots, S^{(2)}_n\} \right| \right] \right)^p. \]
\end{Lem}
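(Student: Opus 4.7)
The plan is to decouple the $p$-th moment of the intersection into a product of first moments by conditioning on the order in which points are hit. Let $I := |\{S^{(1)}_0, \dots, S^{(1)}_n\} \cap \{S^{(2)}_0, \dots, S^{(2)}_n\}|$ and write $I = \sum_y \mathbf{1}_{\{y \in R_1\}} \mathbf{1}_{\{y \in R_2\}}$, where $R_i$ denotes the range of $S^{(i)}$. Expanding $I^p$ and using independence of the two walks gives
\[
E^{x,x}[I^p] = \sum_{y_1, \dots, y_p} P^x\bigl(y_1, \dots, y_p \in \{S_0, \dots, S_n\}\bigr)^2.
\]

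For each fixed tuple $(y_1, \dots, y_p)$, I would enumerate over the possible orders of first visits. For any permutation $\sigma \in \mathfrak{S}_p$, iterated application of the strong Markov property at the successive hitting times yields
\[
P^x\bigl(T^0_{y_{\sigma(1)}} \le T^0_{y_{\sigma(2)}} \le \cdots \le T^0_{y_{\sigma(p)}} \le n\bigr) \le \prod_{i=1}^p P^{y_{\sigma(i-1)}}\bigl(T^0_{y_{\sigma(i)}} \le n\bigr),
\]
with the convention $y_{\sigma(0)} := x$; summing over $\sigma$ gives a union bound for $P^x(y_1, \dots, y_p \in \{S_0, \dots, S_n\})$. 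The same bound remains valid for tuples with repetitions since any degenerate factor equals $1$. Applying Cauchy--Schwarz to the sum over the $p!$ permutations (with equal weights) then gives
\[
P^x\bigl(y_1, \dots, y_p \in \{S_0, \dots, S_n\}\bigr)^2 \le p! \sum_{\sigma} \prod_{i=1}^p P^{y_{\sigma(i-1)}}\bigl(T^0_{y_{\sigma(i)}} \le n\bigr)^2.
\]

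Substituting back into the moment expression and performing the change of variables $(y_1, \dots, y_p) \mapsto (y_{\sigma(1)}, \dots, y_{\sigma(p)})$ shows that each of the $p!$ permutations contributes the same quantity, producing another factor of $p!$. It then remains to bound
\[
\sum_{y_1, \dots, y_p} \prod_{i=1}^p P^{y_{i-1}}(T^0_{y_i} \le n)^2, \qquad y_0 := x.
\]
Carrying out the sum iteratively from $y_p$ inward, the key identity $\sum_y P^z(T^0_y \le n)^2 = E^{z,z}[I]$ (itself a special case of the first display with $p=1$) shows that each inner sum is at most $M := \sup_{z \in V(G)} E^{z,z}[I]$, so the nested sum is bounded by $M^p$. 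Collecting the two factors of $p!$ with $M^p$ yields the claimed $(p!)^2 M^p$. The argument is essentially a combinatorial rearrangement; the only subtlety, readily absorbed into the union bound over permutations, is to handle tuples with repeated entries.
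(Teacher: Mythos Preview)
Your argument is correct and is exactly the approach the paper has in mind: the paper omits the proof, referring to \cite[Lemma 3.1]{LR}, and the Le~Gall--Rosen proof proceeds precisely by expanding the $p$-th moment as $\sum_{y_1,\dots,y_p} P^x(y_1,\dots,y_p \in \{S_0,\dots,S_n\})^2$, bounding each factor by a sum over the $p!$ orderings of first visits via the strong Markov property, applying Cauchy--Schwarz to the sum over permutations, and then summing iteratively using $\sum_y P^z(T^0_y \le n)^2 = E^{z,z}[I]$. Your handling of repeated entries and the relabelling step are also standard and correct.
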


Theorem \ref{intersect-main} easily follows from Proposition \ref{intersect-expectation} and Lemma \ref{moment}. 
The proof of Lemma \ref{moment} is same as in the proof of \cite[Lemma 3.1]{LR}, so we omit the proof.\\

\noindent{\it Acknowledgements.} The author wishes to give his thanks to Takashi Kumagai for notifying him of the application to the lamplighter random walk as in \cite{KuN} and comments on this topic.   
This work was supported by JSPS KAKENHI 16J04213, 18H05830, 19K14549 and by the Research Institute for Mathematical Sciences, a Joint Usage/Research Center located in Kyoto University.

\end{document}